\xpatchcmd{\@thm}{\thm@headpunct{.}}{\thm@headpunct{}}{}{}
\newcommand{\cvd}{\hspace*{\fill}
	{\rm \hbox{\vrule height 0.2 cm width 0.2cm}}}
\renewcommand{\qed}{\cvd}
\newcolumntype{P}[1]{>{\centering\arraybackslash}p{#1}}
\newtheorem{lemma}{Lemma}[section]
\newtheorem{theorem}[lemma]{Theorem}
\newtheorem{proposition}[lemma]{Proposition}
\newtheorem{corollary}[lemma]{Corollary}
\renewenvironment{proof}[1][\noindent \proofname]{{\sc #1. }}{\qed}
\theoremstyle{definition}
\newtheorem{example}[lemma]{Example}
\newtheorem{definition}[lemma]{Definition}
\newtheorem{notation}[lemma]{Notation}{\bf}{\rm}
\newcommand{\abs}[1]{\ensuremath{\left| #1 \right|}}
\newcommand{\op}{\operatorname}
\newcommand{\irr}[1]{{\op{Irr}}(#1)}
\newcommand{\Min}[1]{\op{Min}_G(#1)}
\newcommand{\Lin}[1]{\op{Lin}_G(#1)}
\newcommand{\Mcd}[1]{\op{Mcd}_G(#1)}
\newcommand{\Lcd}[1]{\op{Lcd}_G(#1)}
\begin{document}

\title{\bf Some properties of normal subgroups determined from character tables}

\author[Z. Akhlaghi]{Z. Akhlaghi$^{1,2}$}
	\author [M. J.  Felipe ]{ M. J.  Felipe $^3$}
	\author[ M .K. Jean-Philippe]{ M. K. Jean-Philippe$^{4}$}

\address{$^{1}$ Faculty of Mathematics and Computer Science, Amirkabir University of Technology (Tehran Polytechnic), 15914 Tehran, Iran.}
	\address{$^{2}$ School of Mathematics,
		Institute for Research in Fundamental Science (IPM)
		P.O. Box:19395-5746, Tehran, Iran. }
\address{$^{3}$  Instituto Universitario de Matemática Pura y Aplicada (IUMPA-UPV), Universitat Polit\`ecnica de Val\`encia, Valencia, Spain. ORCID: 0000-0002-6699-313.}
\address{$^{4}$  Departamento de Matemáticas, Instituto Tecnológico de Santo Domingo (INTEC) and Universidad Autónoma de Santo Domingo (UASD), Santo Domingo, República Dominicana.}
\email{ \newline \text{(Z. Akhlaghi) }z\_akhlaghi@aut.ac.ir
\newline \text{(M. J. Felipe) }mfelipe@mat.upv.es
\newline\text{ (M. K. Jean-Philippe) }1097587@est.intec.edu.do, mjean46@uasd.edu.do}

\thanks{The first author is supported by a grant from IPM (No. 1402200112) and the second author is supported by Proyecto CIAICO/2021/163, Generalitat Valenciana (Spain).
}

\date{}

\maketitle

\begin{abstract}
\noindent $G$-character tables of a finite group $G$ were defined in \cite{FPRS}. These tables can be very useful to obtain certain structural information of a normal subgroup from the character table of $G$. We analyze certain structural properties of normal subgroups which can be determined using their $G$-character tables. For instance, we prove an extension of the Thompson's theorem from minimal $G$-invariant characters of a normal subgroup. We also obtain a variation of Taketa's theorem for hypercentral normal subgroups considering their minimal $G$-invariant characters. This generalization allows us to introduce a new class of nilpotent groups, the class of $nMI$-groups, whose members verify that its nilpotency class is bounded by the number of irreducible character degrees of the group. 
\medskip

\noindent \textbf{Mathematics Subject Classification (2022)}: 20C15, 20E45, 20E15.

\smallskip
\noindent \textbf{Keywords} Finite groups $\cdot$  Irreducible characters $\cdot$ Normal subgroups $\cdot$ Minimal $G$-invariant characters $\cdot$ Hypercentral subgroup

\end{abstract}


\section{Introduction}

In general, if $N$ is a normal subgroup of a finite group $G$, the character table of $N$ can not be determined from the character table of $G$. In \cite{FPRS}, the $G$-character table of $N$ obtained from the character table of $G$  is defined (see Section 2). They are non-singular matrices that provide certain arithmetical relations among significant integers associated to the irreducible characters of $N$ such as their character degrees, indices of inertia subgroups and ramification numbers. The character table of $G$ determines those irreducible characters of $G$ that are over the same minimal $G$-invariant character of $N$, which are defined as the sum of  characters of $N$
 belonging in the same orbit determined by the conjugation action of $G$ on the set of irreducible characters of $N$. These characters of $N$ form a basis of the $\mathbb{C}$-vector space of the $G$-invariant class functions of $N$, and they become relevant to $G$-character tables of $N$, playing a similar role that irreducible characters do to the character table of a group $G$.

The nilpotency of a normal subgroup $N$ of a group $G$ can be recognized from the character table $G$, but the nilpotency class of $N$ cannot be recognized. In fact, we can not deduce if a normal subgroup $N$ is abelian (see \cite{Doc} and \cite{Sak}). However, the commutator $[G,N]$ can be computed from $G$-character tables of $N$ as  the intersection of kernels of linear $G$-invariant characters of $N$ and this information can be read from $G$-character tables of $N$ (Corollary~\ref{commutator2}). In particular, we can read from the character table of $G$ if a normal subgroup $N$ is hypercentral and we can obtain its lower central $G$-series. 

We can not compute prime divisors of irreducible character degrees of $N$ from the character table of $G$. However,  prime divisors of the minimal $G$-invariant character degrees of a normal subgroup $N$ are known, considering some relations that arise from $G$-character tables of $N$ (Corollary~\ref{relations}). We present a generalization of the well-known Thompson's theorem (see 12.2 in \cite{ISA}) applied to minimal $G$-invariant character degrees of $N$ (Theorem~\ref{Thompson}). This generalization gives us a sufficient condition to deduce whether a normal subgroup $N$ has a normal $p$-complement from the character table of $G$.

Finally, we present a new class of groups which are determined by the following property: for every irreducible character $\chi \in \irr{G}$ there exists a normal subgroup $H$ of $G$ such that $\{\chi\}=\irr{G|\lambda }$, with $\lambda$ a linear $G$-invariant character of $H$. These groups are called $nMI$-groups. A variation of Taketa's theorem (Theorem~\ref{normal}) (see 5.13 in \cite{ISA}), considering minimal $G$-invariant character degrees of certain hypercentral normal subgroups of $G$, allows us to prove that if $G$ is a $nMI$-group, then $G$ is nilpotent and the nilpotency class of $G$ is bounded by the number of distinct irreducible character degrees of $G$ (Theorem~\ref{invariant}). Moreover, if $G$ is a $nMI$-group, all normal subgroups of $G$ are $G$-invariant $nMI$-subgroups (Definition~\ref{normal-invariant}).

In the sequel, all groups considered are finite. The notation and terminology used are standard, and they are taken mainly from the books \cite{ISA}, \cite{Rose} and \cite{Hu}.

\section{Preliminaries}\label{section 2}

The group $G$ acts by conjugation on the set $\irr{N}$ of irreducible complex characters of $N$.  If $\theta\in \irr{N}$, then
$\theta^g\in \irr{N}$ and $\widehat{\theta}=\sum_{i=1}^t \theta^{g_i}$ is a {\it minimal $G$-invariant character} of $N$, where $\{\theta^{g_i}\mid i=1,\ldots, t\}=\{\theta^{g}\mid g\in G\}$ is the orbit of $\theta$ under the action by conjugation of $G$ on $\irr{N}$, while the set $\{g_i\in G\mid i=1,\ldots, t\}$  is a right transversal in $G$ of $I_G(\theta)=\{g\in G\mid \theta^g=\theta\}$, the {\it inertia subgroup} of $\theta$ in $G$. Clifford's theorem (see \cite[Theorem~(6.2)]{ISA}) states that irreducible characters $\chi$ of $G$ whose restrictions $\chi_{N}$ to $N$ have $\theta$ as constituent satisfy $\chi_N= e\,\widehat{\theta}$ for suitable integers $e$, known as {\it ramification numbers}. This fact introduces an equivalence relation (with respect to $N$) on the set $\irr{G}$ of irreducible complex characters of $G$, being two elements equivalent if their restrictions to the normal subgroup $N$ have a common irreducible constituent. 

We recall the main notation used in Notation 4.5 of \cite{FPRS} about character G-tables.

\begin{notation}\label{notation_tables}
Let $N$ be a normal subgroup of a group $G$. Denote:
\begin{itemize}\item  $\{n_1^G, \ldots, n_k^G\}$  the set of $G$-conjugacy classes of $N$;
 \item $\textsf{D}=(d_{ij})$ a diagonal matrix with entries $d_{ij}=\delta_{ij}|n_i^G|$, where $\delta_{ij}$ is the Kronecker delta function, for $i,j\in \{1, \ldots, k\}$;
\item $\Delta = \{\chi_1=1_G, \chi_2, \ldots, \chi_k\}$ a representative system of the equivalence classes in the equivalence relation with  respect to $N$, defined on $\irr{G}$;
\item $\Omega= \{\theta_i \in \irr{N} \mid 1 \leq i \leq k\}$ such that $\chi_i \in \irr{G|\theta_i}$, for each $i \in \{1, \ldots, k\}$;
\item $t_i=|G:I_{G}(\theta_i)|$, $e_{i} = [(\chi_i)_N, \theta_i]\neq 0$, $1\le i\le k$.
\item $\textsf{X}=(x_{ij}) \in \textup{M}_k(\mathbb{C})$, with entries $x_{ij}=\chi_i(n_j)$ for $1\leq i,j \leq k$, is the $G$-character table of $N$ constructed from $\Delta$, and $\overline{\textsf{X}}^t$ its transposed conjugate matrix; i.e if $\textsf{X}=(x_{ij})$, then $\overline{\textsf{X}}^t=(y_{ij})$ being $y_{ij}=\overline{x_{ji}}$ the complex conjugate of $x_{ji}$, for $1\le i,j\le k$.
\item  $\Lambda_{\textsf{X}}=\text{diag}(\lambda_1,\ldots,\lambda_k)$ the  diagonal matrix with entries $\lambda_i=\abs{N}t_ie_i^2$, $ 1\le i\le k.$
\end{itemize}
\end{notation}

\begin{theorem}\label{square}
With  Notation~\ref{notation_tables}, it holds that  \textup{$\Lambda_{\textsf{X}}=\textsf{X}\textsf{D}\overline{\textsf{X}}^t.$} In particular, \textup{$\textsf{X}$} is non-singular.
\end{theorem}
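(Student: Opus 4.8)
The plan is to compute the entries of the matrix product $\textsf{X}\textsf{D}\overline{\textsf{X}}^t$ directly and recognize each as a scalar multiple of an inner product of restricted characters. Fix $i,j \in \{1,\ldots,k\}$. Since $\textsf{D}$ is diagonal with $d_{ll} = |n_l^G|$, expanding the product gives
\[
(\textsf{X}\textsf{D}\overline{\textsf{X}}^t)_{ij} = \sum_{l=1}^{k} x_{il}\,|n_l^G|\,\overline{x_{jl}} = \sum_{l=1}^{k} |n_l^G|\,\chi_i(n_l)\,\overline{\chi_j(n_l)}.
\]
Because each $\chi_i$ is a character of $G$, it is constant on every $G$-conjugacy class contained in $N$, so the summand $|n_l^G|\,\chi_i(n_l)\,\overline{\chi_j(n_l)}$ is exactly the contribution of the whole class $n_l^G$ to a sum ranging over all of $N$. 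First I would therefore rewrite the above as $\sum_{n\in N} \chi_i(n)\overline{\chi_j(n)} = \abs{N}\,[(\chi_i)_N,(\chi_j)_N]$, where the inner product is taken on $N$.

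The next step is to evaluate $[(\chi_i)_N,(\chi_j)_N]$ using Clifford's theorem. By the construction of $\Omega$ we have $\chi_i\in\irr{G|\theta_i}$, so $(\chi_i)_N = e_i\,\widehat{\theta_i}$, where $\widehat{\theta_i}=\sum_{s=1}^{t_i}\theta_i^{g_s}$ is the sum over the $G$-orbit of $\theta_i$, a set of $t_i$ pairwise distinct irreducible characters of $N$. Hence $[(\chi_i)_N,(\chi_j)_N] = e_i e_j\,[\widehat{\theta_i},\widehat{\theta_j}]$. When $i=j$, orthonormality of the distinct constituents $\theta_i^{g_s}$ yields $[\widehat{\theta_i},\widehat{\theta_i}] = t_i$, so the diagonal entry becomes $\abs{N}\,e_i^2 t_i = \lambda_i$, as required.

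The key structural point — the step I expect to carry the real content — is showing that all off-diagonal entries vanish. For $i\neq j$ the characters $\chi_i$ and $\chi_j$ represent distinct equivalence classes in $\irr{G}$ under the relation with respect to $N$; by definition this means $(\chi_i)_N$ and $(\chi_j)_N$ share no common irreducible constituent. Since, by Clifford's theorem, the irreducible constituents of $(\chi_i)_N$ are precisely the members of the $G$-orbit of $\theta_i$, the orbits of $\theta_i$ and $\theta_j$ must be disjoint, whence $\widehat{\theta_i}$ and $\widehat{\theta_j}$ are orthogonal and $[\widehat{\theta_i},\widehat{\theta_j}]=0$. This establishes $\textsf{X}\textsf{D}\overline{\textsf{X}}^t = \Lambda_{\textsf{X}}$.

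Finally, non-singularity follows formally. The matrix $\Lambda_{\textsf{X}}$ is diagonal with entries $\lambda_i = \abs{N}t_ie_i^2 > 0$ and $\textsf{D}$ is diagonal with entries $|n_l^G|\ge 1 > 0$, so both are invertible. Taking determinants in $\textsf{X}\textsf{D}\overline{\textsf{X}}^t=\Lambda_{\textsf{X}}$ and using $\det\overline{\textsf{X}}^t = \overline{\det\textsf{X}}$ gives $|\det \textsf{X}|^2\det\textsf{D} = \det\Lambda_{\textsf{X}}\neq 0$, forcing $\det\textsf{X}\neq 0$. The only delicate verifications are that the class-function identity correctly repackages the weighted sum over class representatives as a sum over $N$, and the orbit-disjointness argument above; everything else is bookkeeping.
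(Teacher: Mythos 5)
Your proof is correct. One point of order: this paper never actually proves Theorem~\ref{square} --- it is stated as a preliminary and imported from \cite{FPRS} --- so there is no in-paper proof to compare yours against. Your route (expand $(\textsf{X}\textsf{D}\overline{\textsf{X}}^t)_{ij}$ over the $G$-classes partitioning $N$ to get $\abs{N}\,[(\chi_i)_N,(\chi_j)_N]$, apply Clifford's theorem to rewrite this as $\abs{N}\,e_ie_j[\widehat{\theta_i},\widehat{\theta_j}]$, and use disjointness of the $G$-orbits of $\theta_i$ and $\theta_j$ for $i\neq j$ to kill the off-diagonal entries) is the natural argument and is essentially the one given in the cited source; the determinant argument for non-singularity is likewise sound, since each $\lambda_i=\abs{N}t_ie_i^2>0$ and each $\abs{n_l^G}>0$.
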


As a consequence of   Theorem~\ref{square}, it is obtained that the equivalence  classes of irreducible characters of $G$ are in one-to-one correspondence with the orbits of irreducible characters of $N$ under the action of $G$ by conjugation, by Clifford's theorem,  and we can recognize the sets $\irr{G|\theta}$, for every $\theta \in \irr{N}$, from the character table of $G$. Moreover, the following arithmetical relations between the aforementioned integers $t_i$, $e_{i}$ and $\theta_i(1)$.

\begin{corollary}\label{relations}
 With Notation~\ref{notation_tables}, the next integer relations hold, where the corresponding right sides can be computed from the  $G$-character table $\textsf{X}$ of $N$:
\begin{align}
e_{i}^2t_i&=\dfrac{\lambda_i}{\abs{N}} \tag{$A_i$} \\
t_i\theta_i(1)^2&= \dfrac{|N|\chi_i(1)^2}{\lambda_i} \tag{$B_i$}
\end{align}
for $1\le i\le k$.

\end{corollary}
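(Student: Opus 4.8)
The plan is to obtain both $(A_i)$ and $(B_i)$ as purely arithmetic consequences of two facts already available to us: the definition $\lambda_i=\abs{N}t_ie_i^2$ recorded in Notation~\ref{notation_tables}, and a degree identity coming from Clifford's theorem. The role of Theorem~\ref{square} is then only to guarantee that the quantities appearing on the right-hand sides can actually be read off from $\textsf{X}$. Note that both left-hand sides, $e_i^2t_i$ and $t_i\theta_i(1)^2$, are manifestly integers, so the identities automatically certify that the right-hand sides are integers as well.

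For $(A_i)$ I would simply divide the defining equality $\lambda_i=\abs{N}t_ie_i^2$ by $\abs{N}$, which gives $e_i^2t_i=\lambda_i/\abs{N}$ at once. The point worth stressing is that the right-hand side is computable: by Theorem~\ref{square} the diagonal matrix $\Lambda_{\textsf{X}}=\textsf{X}\textsf{D}\overline{\textsf{X}}^t$ is determined by $\textsf{X}$ together with the $G$-class sizes, so
\[
\lambda_i=\sum_{j=1}^{k}\abs{n_j^G}\,\abs{x_{ij}}^2,
\qquad
\abs{N}=\sum_{j=1}^{k}\abs{n_j^G}=\lambda_1,
\]
the last equality because $\theta_1=1_N$ forces $t_1=e_1=1$.

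For $(B_i)$ I would first record the degree relation. Since $\chi_i\in\irr{G|\theta_i}$, Clifford's theorem gives $(\chi_i)_N=e_i\widehat{\theta_i}=e_i\sum_{l=1}^{t_i}\theta_i^{g_l}$; evaluating at the identity and using $\theta_i^{g_l}(1)=\theta_i(1)$ yields $\chi_i(1)=e_it_i\theta_i(1)$. Squaring this and substituting the definition of $\lambda_i$ then gives
\[
\frac{\abs{N}\chi_i(1)^2}{\lambda_i}
=\frac{\abs{N}\,e_i^2t_i^2\,\theta_i(1)^2}{\abs{N}t_ie_i^2}
=t_i\theta_i(1)^2,
\]
which is exactly $(B_i)$. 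Here $\chi_i(1)=x_{i1}$ is an entry of $\textsf{X}$, while $\lambda_i$ and $\abs{N}$ are computable as displayed above.

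The identities themselves are immediate, so the one step I would be careful about is the computability assertion, namely that $\Lambda_{\textsf{X}}$ is genuinely recoverable. This rests entirely on Theorem~\ref{square} once the diagonal matrix $\textsf{D}$ of $G$-class sizes is known, and $\textsf{D}$ is available because the $G$-conjugacy classes of $N$ are the $G$-orbits of elements of $N$, whose cardinalities are read from the character table of $G$. With $\textsf{D}$ in hand, forming $\textsf{X}\textsf{D}\overline{\textsf{X}}^t$ produces every $\lambda_i$, and the two displayed substitutions then complete the argument.
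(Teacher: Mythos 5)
Your proposal is correct and coincides with the intended argument: the paper states this corollary without proof, recalling it from \cite{FPRS} as a consequence of Theorem~\ref{square}, and the derivation is precisely yours, namely $(A_i)$ is the defining relation $\lambda_i=\abs{N}t_ie_i^2$ rearranged, while $(B_i)$ follows by combining that definition with the Clifford degree identity $\chi_i(1)=e_it_i\theta_i(1)$, with Theorem~\ref{square} serving only to make $\Lambda_{\textsf{X}}=\textsf{X}\textsf{D}\overline{\textsf{X}}^t$, and hence each $\lambda_i$, computable from $\textsf{X}$ and $\textsf{D}$. Your auxiliary identifications ($\abs{N}=\lambda_1$ and $\chi_i(1)=x_{i1}$) implicitly assume the standard convention that $n_1=1$, which is harmless.
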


\section{Main results}\label{section 3}

In general, the nilpotency and solvability of a normal subgroup $N$ of a group $G$ can be obtained from the character table of $G$. However, it is not possible to know the derived length when $N$ is solvable or the nilpotency class when $N$ is nilpotent. In fact, we are not able to  find out if $N$ is abelian by the character table of $G$.  (It is  well-known that Problem 10 of Brauer's famous list in \cite{B} has a negative answer). 

\begin{proposition}
Let $N$ be a normal subgroup of $G$. The character table of $G$ determines if $N$ is nilpotent or solvable.
\end{proposition}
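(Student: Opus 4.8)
The plan is to reduce both questions to the lattice of normal subgroups of $G$ together with their orders, all of which are visible in the character table, rather than trying to read properties of $N$ in isolation.

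First I would recall the standard facts extractable from the character table of $G$. For each $\chi \in \irr{G}$ the kernel $\ker\chi = \{g \in G : \chi(g) = \chi(1)\}$ is a union of conjugacy classes read off directly, and every normal subgroup of $G$ is an intersection of such kernels. Hence the character table determines the full set of normal subgroups of $G$, each presented as a union of conjugacy classes, together with their containment relations. Moreover each class size is computable by column orthogonality (which yields $|\ce{G}{g}|$, so that $|g^G| = |G|/|\ce{G}{g}|$ with $|G| = \sum_{\chi}\chi(1)^2$), so the order of every normal subgroup is known. In particular $N$ is one of these unions of classes, and $|N|$ together with the $p$-parts $p^{a_p}$ of $|N|$ are determined.

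The observation that bypasses the negative answer to Brauer's Problem~10 is that characteristic subgroups of $N$ are normal in $G$. For nilpotency I would use the criterion: $N$ is nilpotent if and only if for every prime $p$ dividing $|N|$ there is a normal subgroup $P \trianglelefteq G$ with $P \le N$ and $|P| = p^{a_p}$. Indeed, such a $P$ is a normal Sylow $p$-subgroup of $N$, and if this holds for all $p$ then $N$ is the direct product of its Sylow subgroups, hence nilpotent; conversely, if $N$ is nilpotent its Sylow subgroups are characteristic in $N$, hence normal in $G$, of order $p^{a_p}$. Since the previous step lets us enumerate the normal subgroups of $G$ lying inside $N$ and read their orders, this criterion is decided by the character table. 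For solvability I would take a chief series of $G$ refined through $N$, say $1 = G_0 < G_1 < \cdots < G_r = N < \cdots < G_m = G$, obtained as a maximal chain in the lattice of normal subgroups found above. Then $N$ is solvable if and only if every chief factor $G_{i+1}/G_i$ with $G_{i+1} \le N$ is abelian. Each such factor is characteristically simple, hence a direct product of copies of a simple group $S$, and it is abelian exactly when $S$ is cyclic of prime order, which happens precisely when $|G_{i+1}/G_i|$ is a prime power (a simple group of prime-power order has nontrivial center, so it is cyclic of prime order). As all the $G_i$ are normal in $G$, their orders are known, so each factor order $|G_{i+1}|/|G_i|$ can be tested for being a prime power, and solvability of $N$ is decided.

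The main obstacle is conceptual rather than computational: one must avoid reading properties of $N$ directly — which is exactly what fails for abelianness — and instead translate nilpotency and solvability into statements about the orders of $G$-normal subgroups, exploiting that the relevant subgroups (the Sylow subgroups of a nilpotent $N$, and the chief factors below a solvable $N$) are automatically $G$-invariant. Once this translation is in place, the two elementary group-theoretic equivalences (the normal-Sylow criterion for nilpotency and the prime-power chief-factor criterion for solvability) are routine to verify.
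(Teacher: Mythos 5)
Your proof is correct, but it follows a genuinely different route from the paper's. For nilpotency, the paper works elementwise: by Higman's theorem (8.21 of \cite{ISA}) and the congruence criterion of Corollary 5.3 of \cite{Bel}, the character table reveals which $G$-classes inside $N$ consist of $p$-elements, and the Sylow $p$-subgroup $P$ of $N$ is normal in $N$ precisely when the union $T$ of those classes satisfies $|T|+1=|P|$. Your argument instead stays entirely inside the lattice of normal subgroups of $G$ (kernels of irreducible characters and their intersections, with orders computed from class sizes), testing for the existence of a $G$-normal subgroup of order $|N|_p$ inside $N$; the two tests verify the same group-theoretic fact (normality of the Sylow subgroups of $N$, which are characteristic in $N$) by different means. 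For solvability, the paper checks for a chain of $G$-normal subgroups from $1$ to $N$ with nilpotent factors, re-running its nilpotency test on the character tables of the quotients $G/N_{i-1}$, whereas you take a maximal chain through $N$ in the normal subgroup lattice (a chief $G$-series) and test whether each factor below $N$ has prime-power order, using that an abelian chief factor is exactly one of prime-power order. Your version is more self-contained: it needs no element-order detection and no passage to quotient character tables, only the classical facts that normal subgroups and their orders are visible in the character table. What the paper's approach buys in exchange is finer, elementwise information --- it locates the actual set of $p$-elements of $N$, hence the classes making up the normal Sylow subgroup --- and it is phrased in terms of $G$-conjugacy classes of $N$, consistent with the $G$-character table machinery developed in the rest of the paper.
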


\begin{proof} Let $p$ be a prime divisor of the order of $N$. By Higman's theorem (see 8.21 of \cite{ISA}), the character table of $G$ determines the sets of  prime divisors  of orders of elements of $G$. In fact, an element $x \in N$ is a $p$-element if and only if
$$\chi(x)^{|G|_p} \equiv \chi(1) \text{  (mod $p$)},$$
for all $\chi \in \irr{G}$ (see Corollary 5.3 of \cite{Bel}). Therefore, from the character table of $G$ we can realize which $G$-conjugacy classes of $N$ contain elements with  $p$-power order. Let $P$ be a Sylow $p$-subgroup of $N$ and
$T= \dot{\bigcup}_{x \in P}^{}x^G,$
the disjoint union of $G$-conjugacy clases of $p$-elements of $N$. The $G$-conjugacy class size $|x^G|$, for $x \in P$, can be obtained from the character table of $G$. Then $P$ is normal in $N$ if and only if $|T|+1=|P|$.  Hence, the character table of $G$ allows us to know the nilpotency of $N$.

We observe that $N$ is solvable if and only if there exists a series
$$1=N_0 \unlhd N_1 \unlhd N_2 \unlhd \ldots \unlhd N_r=N$$
such that $N_i \unlhd G$, for $i \in \{0, \ldots ,r\}$, and $N_i/N_{i-1}$ is nilpotent, for $i \in \{1, \ldots, r\}$. Since we can check if the quotient groups $N_i/N_{i-1}$ are nilpotent from the character table of $G/N_{i-1}$ (obtained from the character table of $G$), then we can deduce if $N$ is solvable or not from the character table of $G$.
\end{proof}

We consider the following set of irreducible characters of $N$
$$\Lin{N}=\{\theta \in \irr{N} \text{ }| \text{ } \theta \text{ is a linear } G\text{-invariant character of } N \}.  $$

\begin{proposition}\label{commutator}
 Let $N$ be a normal subgroup of a group $G$, $\theta \in \irr{N}$ and $\chi \in \irr{G|\theta}$. Then $[N,G] \subseteq 
\ker(\chi)$ if and only if $\theta \in \Lin{N}$.  
In particular, $|N:[N,G]| = | \Lin{N}|$.

\end{proposition}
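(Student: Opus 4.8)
The plan is to reduce both implications to a single clean fact: that $[N,G]\subseteq\ker(\chi)$ is equivalent to $\chi_N$ being a scalar multiple of a single linear character of $N$. Fix a representation $\rho$ of $G$ affording $\chi$, so that $\ker(\rho)=\ker(\chi)$.

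For the forward direction, assume $[N,G]\subseteq\ker(\chi)$. Then for all $n\in N$ and $g\in G$ the commutator $[n,g]$ lies in $\ker(\rho)$, which forces $\rho(n)\rho(g)=\rho(g)\rho(n)$. Thus $\rho(n)$ commutes with every element of $\rho(G)$, and since $\rho$ is irreducible, Schur's lemma gives $\rho(n)=\mu(n)\,I$ for a scalar $\mu(n)$. The map $n\mapsto\mu(n)$ is then a linear character of $N$ with $\chi_N=\chi(1)\mu$. As $\chi\in\irr{G|\theta}$, $\theta$ is a constituent of $\chi_N$, so $\theta=\mu$ and in particular $\theta$ is linear. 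To obtain $G$-invariance I would invoke Clifford's theorem: $\chi_N=e\,\widehat{\theta}=e\sum_{i=1}^{t}\theta^{g_i}$, and comparing this with $\chi_N=\chi(1)\theta$, a multiple of the single irreducible $\theta$, forces the orbit length $t=1$. Hence $\theta$ is $G$-invariant and $\theta\in\Lin{N}$.

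For the reverse direction, assume $\theta\in\Lin{N}$. Linearity together with $G$-invariance (so that $\theta(g^{-1}ng)=\theta(n)$ for all $g,n$) yields $\theta([n,g])=\theta(n)^{-1}\theta(g^{-1}ng)=1$, whence $[N,G]\subseteq\ker(\theta)$. Since $\theta$ is $G$-invariant, its orbit is trivial and Clifford gives $\chi_N=e\theta$, so that $\ker(\chi)\cap N=\ker(\chi_N)=\ker(\theta)$. Combining these, $[N,G]\subseteq\ker(\theta)=\ker(\chi)\cap N\subseteq\ker(\chi)$, as required. For the final counting statement, I would observe that $N/[N,G]$ is abelian (it contains $[N,N]$) and that a linear character of $N$ is $G$-invariant exactly when it is trivial on $[N,G]$, i.e.\ when it is inflated from $N/[N,G]$. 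This identifies $\Lin{N}$ with $\irr{N/[N,G]}$, and since $N/[N,G]$ is abelian its number of irreducible characters equals its order, giving $|\Lin{N}|=|N:[N,G]|$.

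I expect the main obstacle to be the $G$-invariance part of the forward direction: the scalar argument delivers linearity of $\theta$ immediately, but showing the orbit is trivial requires matching the Clifford decomposition $\chi_N=e\,\widehat{\theta}$ against the scalar form $\chi_N=\chi(1)\theta$ to conclude $t=1$. The remaining steps, including the kernel computations and the final counting, are routine once this comparison is made.
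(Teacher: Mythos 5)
Your proof is correct, but it takes a genuinely different route from the paper in the forward direction. The paper never invokes a representation or Schur's lemma: from $[N,G]\subseteq\ker(\chi)$ it passes immediately to $[G,N]\subseteq\ker(\theta)$ (because $\ker(\chi)\cap N=\ker(\chi_N)$ lies in the kernel of every irreducible constituent of $\chi_N$), and then proves, purely by computation with character values, the set identity $\Lin{N}=\{\theta\in\irr{N}\mid [G,N]\subseteq\ker(\theta)\}$: linearity of $\theta$ comes from $N'\subseteq[G,N]\subseteq\ker(\theta)$, and $G$-invariance from $1=\theta(g^{-1}n^{-1}gn)=\theta^{g^{-1}}(n^{-1})\theta(n)$. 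That identity (equivalently $\Lin{N}=\irr{N/[G,N]}$) is exactly what the paper reuses later, for instance in the proofs of Theorem~\ref{Thompson} and Corollary~\ref{commutator2}. You instead work at the level of $\chi$: Schur's lemma gives $\chi_N=\chi(1)\mu$ with $\mu$ linear, and Clifford's theorem then forces the orbit of $\theta=\mu$ to be trivial; this is the standard ``center of a character'' mechanism ($N\ker(\chi)/\ker(\chi)$ central in $G/\ker(\chi)$) and is perfectly valid. Your reverse implication is in fact more explicit than the paper's: the paper establishes $\theta\in\Lin{N}\Rightarrow[G,N]\subseteq\ker(\theta)$ but leaves the step back up to $\ker(\chi)$ essentially implicit, whereas you supply it via $\chi_N=e\theta$ and $\ker(\chi)\cap N=\ker(\theta)$. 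One small point: your counting step asserts that a linear character of $N$ trivial on $[N,G]$ is automatically $G$-invariant; this is stated but not verified in your write-up. It is the one-line computation $\mu(g^{-1}ng)=\mu(n[n,g])=\mu(n)$ (precisely the ``Reciprocally'' paragraph of the paper's proof), and it should be included to make the identification $\Lin{N}=\irr{N/[N,G]}$, and hence the equality $|N:[N,G]|=|\Lin{N}|$, self-contained.
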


\begin{proof} Firstly, we prove that
$ \Lin{N}=\{\theta \in \irr{N} \text{ }| \text{ }  [G,N] \subseteq \ker(\theta)\}.$
Let $[n,g]$ be a generator of $[N,G]=[G,N]$, with $g \in G$ and $n \in N$. Let $\theta \in \Lin{N}$, then
$$\theta([g,n])=\theta(g^{-1}n^{-1}gn)=\theta^{g^{-1}}(n^{-1})\theta(n)=\theta(n^{-1})\theta(n)=\theta(n^{-1}n)=\theta(1)=1.$$
Then $[N,G] \subseteq \ker(\theta).$

Reciprocally, if $\theta \in \irr{N}$ and $[N,G] \subseteq \ker(\theta)$, then $\theta$ is a linear character of $N$ since $N' \subseteq \ker(\theta)$. Moreover, if $g \in G$ and $n \in N$, then
$$1=\theta(1)=\theta(g^{-1}n^{-1}gn)=\theta(g^{-1}n^{-1}g)\theta(n).$$
Therefore
$$\theta^{g^{-1}}(n^{-1})\theta(n)=1$$ for every $n \in N$ and $g \in G$. Then
$\theta^g(n)=\theta(n),$
for every $n\in N$ and $g \in G$ and $\theta$ is a $G$-invariant character of $N$; that is $\theta \in \Lin{N}$. 
In particular, if $[N,G]  \subseteq \ker(\chi)$ for some $\chi \in \irr{G|\theta}$ with $\theta \in \irr{N}$, then $[G,N] \subset \ker(\theta)$ and $\theta \in \Lin{N}$.

Moreover,
$$\irr{N/[G,N]}=\{\theta \in \irr{N} \mid  [G,N] \subseteq \ker(\theta)\} =  \Lin{N}$$
and  $|N:[N,G]| = | \Lin{N}|$, since $N/[G,N]$ is abelian.
\end{proof}

\begin{corollary} \label{commutator2} Let $N$ be a normal subgroup of a group $G$, then the subgroup $[N,G]$ and the sets $\irr{G|\theta}$, with $\theta \in \Lin{N}$,  are determined from the character table of $G$.

\end{corollary}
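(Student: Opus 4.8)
The plan is to extract everything from the $G$-character table $\textsf{X}$ of $N$, which—together with the equivalence relation on $\irr{G}$ with respect to $N$, and hence the sets $\irr{G|\theta}$ for $\theta\in\irr{N}$—is already known to be computable from the character table of $G$ by the discussion following Theorem~\ref{square}. The first task is to single out, among the orbit representatives $\theta_1,\dots,\theta_k$ in $\Omega$, exactly those that lie in $\Lin{N}$.

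For this I would invoke relation $(B_i)$ of Corollary~\ref{relations}, which expresses $t_i\theta_i(1)^2$ in terms of data that can be read from $\textsf{X}$. Since $t_i$ and $\theta_i(1)$ are positive integers, $t_i\theta_i(1)^2=1$ holds if and only if $t_i=1$ and $\theta_i(1)=1$, i.e.\ if and only if $\theta_i$ is a linear $G$-invariant character of $N$. Hence the set of indices $i$ with $\theta_i\in\Lin{N}$ is recognizable from $\textsf{X}$, and so are the corresponding equivalence classes $\irr{G|\theta_i}$; this already settles the assertion about the sets $\irr{G|\theta}$.

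Next I would recover the kernels. For an index $i$ with $\theta_i\in\Lin{N}$ one has $e_i=t_i=1$, so $\widehat{\theta_i}=\theta_i$ and $(\chi_i)_N=\theta_i$; consequently the $i$-th row of $\textsf{X}$ records the values $\theta_i(n_j)=x_{ij}$ on the representatives of the $G$-classes of $N$. Because $\theta_i$ is linear and $G$-invariant, $\ker(\theta_i)$ is a $G$-invariant normal subgroup of $N$, hence a union of $G$-classes, and $n_j^G\subseteq\ker(\theta_i)$ precisely when $x_{ij}=1=\theta_i(1)$. Thus each such $\ker(\theta_i)$ is read off $\textsf{X}$ as a union of $G$-conjugacy classes.

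Finally, by Proposition~\ref{commutator} we have $\Lin{N}=\irr{N/[G,N]}$, and since $N/[G,N]$ is abelian the intersection of the kernels of all its irreducible characters is trivial; lifting back to $N$ this gives $[N,G]=\bigcap_{\theta\in\Lin{N}}\ker(\theta)$. Therefore $[N,G]$ is the union of those $G$-classes $n_j^G$ for which $x_{ij}=1$ simultaneously for every $i$ with $\theta_i\in\Lin{N}$, which is computable from $\textsf{X}$ and hence from the character table of $G$. The only real subtlety is the bookkeeping step of passing from the linear $G$-invariant characters to the subgroup $[N,G]$ itself via the intersection-of-kernels identity; once the linear $G$-invariant orbits have been identified through $(B_i)$, the remainder is a direct inspection of the corresponding rows of $\textsf{X}$.
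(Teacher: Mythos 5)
Your identification of the indices $i$ with $\theta_i\in\Lin{N}$ is correct and coincides with the paper's argument: by $(B_i)$, $t_i\theta_i(1)^2=1$ if and only if $\lambda_i=|N|\chi_i(1)^2$, which is readable from $\Lambda_{\textsf{X}}$; combined with the identity $[N,G]=\bigcap_{\theta\in\Lin{N}}\ker(\theta)$ coming from Proposition~\ref{commutator}, this is the skeleton of the paper's proof, and it settles the assertion about the sets $\irr{G|\theta}$.

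However, your kernel-extraction step contains a genuine error. From $\theta_i\in\Lin{N}$ you conclude $e_i=t_i=1$ and hence $(\chi_i)_N=\theta_i$. The $G$-invariance of $\theta_i$ does give $t_i=1$, but it gives no control whatsoever on the ramification number $e_i=[(\chi_i)_N,\theta_i]$: one only has $(\chi_i)_N=e_i\theta_i$ with $e_i=\chi_i(1)$, and $e_i>1$ is possible (the fully ramified case --- precisely the phenomenon on which the paper's later notion of a linear character pair is built). Concretely, take $G=Q_8$, $N=\ze{G}$ and $\theta$ the faithful linear character of $N$; then $\theta\in\Lin{N}$, the unique $\chi\in\irr{G|\theta}$ has degree $2$, and $\chi_N=2\theta$. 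The row of $\textsf{X}$ corresponding to $\chi$ is $(2,-2)$, so your criterion ``$n_j^G\subseteq\ker(\theta_i)$ exactly when $x_{ij}=1$'' is satisfied by no class at all, and your recipe would return the empty set for $\bigcap_{\theta\in\Lin{N}}\ker(\theta)$ instead of the correct value $[N,G]=1$. The repair is immediate: since $t_i=\theta_i(1)=1$ forces $e_i=\chi_i(1)$, the correct test is $x_{ij}=\chi_i(1)$, i.e.\ membership in $\ker(\chi_i)$; equivalently $\ker(\theta_i)=\ker(\chi_i)\cap N$, so that
$$[N,G]=\bigcap_{\substack{\theta \in \Lin{N} \\ \chi \in \irr{G|\theta} \cap \Delta}}\ker(\chi)\cap N,$$
which is exactly how the paper concludes, bypassing the ramification numbers altogether by working with the kernels of the characters of $G$ rather than with the rows of $\textsf{X}$ normalized as characters of $N$.
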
 

\begin{proof} Let $\Delta$ be a representative system of the equivalence classes in the equivalence relation with  respect to $N$ defined on $\irr{G}$. By Corollary~\ref{relations}$(B_i)$,  we have $\theta \in \Lin{N}$ if and only if
$$\lambda_i=|N|\chi_i(1)^2$$
where $\lambda_i$ is an element of the main diagonal of the matrix $\Lambda_X$ obtained from a $G$-character table $X$ of $N$, with $\chi_i \in \Delta$ such that $[(\chi_i)_N,\theta] \neq 0$. As a consequence, we can read from the correspondent $G$-character table $X$ of $N$ if a constituent of $(\chi_i)_N$ is a linear $G$-invariant character of $N$. Observe that
$$[N,G] = \bigcap_{\theta \in \Lin{N}}
\ker(\theta) =  \bigcap_{
\begin{scriptsize}
\begin{array}{c}

 \theta \in \Lin{N} \\
 \chi \in \irr{G|\theta} \cap \Delta
\end{array}
\end{scriptsize}} \ker(\chi) \cap N.$$
Therefore, the subgroup $[N,G]$ can also be determined from the character table of $G$.
\end{proof}

Next, we introduce some notation and some elementary results on hypercentral subgroups that we will use later. Let $N$ be a normal subgroup of $G$. A $G$-series of $N$ is a normal series of $N$
$$1=N_0 \trianglelefteq N_1 \trianglelefteq \cdots \trianglelefteq N_s=N$$
with $N_{i} \trianglelefteq G$, for $i \in\{1, \ldots, s\}$. In particular, we said that a $G$-series of $N$ is a chief $G$-series if $N_{i}/N_{i-1}$ is a chief factor of $G$, for every $i \in\{1, \ldots, s\}$.

\begin{definition} Let $N$ be a normal subgroup of a group $G$. The descending $G$-series
$$\Gamma_G^1(N)=N \trianglerighteq \Gamma_G^2(N)=[\Gamma_G^1(N),G] \trianglerighteq \Gamma_G^3(N)=[\Gamma_G^2(N),G] \trianglerighteq \ldots$$
is called  lower central $G$-series of $N$.

\end{definition}

If there exists some integer $r$ such that $\Gamma_G^r(N)=1$, then $N$ is a hypercentral subgroup of $G$. As a consequence of Corollary~\ref{commutator2} we obtain the following.


\begin{corollary}\label{Glength} Let $N$ be a normal subgroup of a group $G$, then the character table of $G$ determines the $G$-hypercentre of $N$. In particular, if $N$ is a hypercentral subgroup of $G$, the character table of $G$ determines  lower central $G$-series of $N$. 

\end{corollary}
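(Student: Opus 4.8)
The plan is to build everything from Corollary~\ref{commutator2}, which already tells us that for any normal subgroup $M$ of $G$ the commutator $[M,G]$ is computable from the character table of $G$, provided we can recognize $M$ among the normal subgroups visible in the table. The key observation is that normal subgroups of $G$ contained in $N$ correspond to intersections of kernels of irreducible characters of $G$, and these are readable from the character table; moreover, since each $\Gamma_G^{i}(N)$ is normal in $G$ (being iterated commutators with $G$), the lower central $G$-series consists of such recognizable subgroups.

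First I would recall that $\Gamma_G^2(N) = [N,G]$ is determined from the character table of $G$ by Corollary~\ref{commutator2}. The inductive step is to apply the same corollary to the subgroup $M = \Gamma_G^{i}(N)$: since $M \trianglelefteq G$, the corollary (reading $[M,G]$ as an intersection of kernels of linear $G$-invariant characters of $M$, which in turn are intersections of kernels of $\irr{G}$ restricted to $M$) yields $\Gamma_G^{i+1}(N) = [M,G]$ from the character table of $G$. Thus by induction every term $\Gamma_G^{i}(N)$ of the lower central $G$-series, together with the normal subgroups of $G$ they represent, is computable. The descending chain then either stabilizes at a nontrivial subgroup or reaches $1$; the $G$-hypercentre is obtained by taking the terminal term of this descending sequence (equivalently, by dualizing to the ascending $G$-central series whose terms are also normal in $G$ and hence recognizable through kernels in the quotients $G/\Gamma_G^{i}(N)$, whose character tables are obtained from that of $G$).

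For the $G$-hypercentre statement specifically, I would argue that the ascending $G$-central series of $N$ — the terms $Z_i$ with $Z_0 = 1$ and $Z_{i}/Z_{i-1}$ the $G$-centralized part of $N/Z_{i-1}$ — can be read off because each $Z_i \trianglelefteq G$ and the character table of each quotient $G/Z_{i-1}$ is determined by that of $G$; within each quotient the relevant central-type subgroup is again recovered via Corollary~\ref{commutator2} applied to $N/Z_{i-1}$. The union (stabilized term) of this ascending series is precisely the $G$-hypercentre of $N$, and it is determined from the character table of $G$. When $N$ itself is hypercentral, the lower central $G$-series reaches $1$, and every one of its terms was shown computable, giving the ``in particular'' claim.

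The main obstacle I anticipate is bookkeeping the passage to quotients: Corollary~\ref{commutator2} is stated for a normal subgroup of $G$, and to iterate I must verify that (a) the character table of the quotient $G/\Gamma_G^{i-1}(N)$ is genuinely recoverable from that of $G$ once the normal subgroup $\Gamma_G^{i-1}(N)$ is known, and (b) the image of $N$ in that quotient plays the role of ``$N$'' in a fresh application of the corollary, so that $[\,\overline{N},\,G/\Gamma_G^{i-1}(N)\,]$ equals $\Gamma_G^{i}(N)/\Gamma_G^{i-1}(N)$. Both are standard, since factoring out a known normal subgroup and restricting attention to characters containing it in their kernel produces the quotient's character table, but making the identification of commutator subgroups across the quotient clean is where care is needed.
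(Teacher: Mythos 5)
Your induction for the terms of the lower central $G$-series is correct and is exactly the paper's intended use of Corollary~\ref{commutator2}: each $\Gamma_G^{i}(N)$ is normal in $G$, so the corollary applies to it and yields $\Gamma_G^{i+1}(N)=[\Gamma_G^{i}(N),G]$ from the character table; this settles the ``in particular'' clause. The genuine error is your claim that ``the $G$-hypercentre is obtained by taking the terminal term of this descending sequence,'' together with the assertion that this is ``equivalent'' to passing to the ascending $G$-central series. The stabilized term of the lower central $G$-series is not the $G$-hypercentre, and the two series are not dual in that sense. Take $G=S_3$ and $N=A_3$: here $[A_3,S_3]=A_3$, so $\Gamma_G^{i}(N)=A_3$ for all $i$ and the terminal term is $A_3$, whereas $H_G(N)=H(G)\cap N=1$ because $\mathbf{Z}(S_3)=1$. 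The failure is equally visible in the opposite direction: if $N$ is hypercentral in $G$, the terminal term is $1$ while the $G$-hypercentre is all of $N$. The only valid bridge between the two series is Proposition~\ref{lenght} (the descending series reaches $1$ if and only if the ascending one reaches $N$), and it gives no identification of individual terms.

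What rescues your proposal is the ascending-series argument of your last paragraph, but one step there is also off: Corollary~\ref{commutator2} applied to $N/Z_{i-1}$ produces the commutator $[N/Z_{i-1},G/Z_{i-1}]$, not the centralized part $\mathbf{Z}(G/Z_{i-1})\cap N/Z_{i-1}$, and knowing the former does not determine the latter. To repair it, either read $\mathbf{Z}(G/Z_{i-1})\cap N/Z_{i-1}$ as the union of the $G/Z_{i-1}$-classes of size $1$ inside $N/Z_{i-1}$ (class sizes are character-table data), or stay with Corollary~\ref{commutator2} but apply it to \emph{all} normal subgroups $\overline{M}$ of $\overline{G}=G/Z_{i-1}$ contained in $\overline{N}$ (all of them are visible as intersections of kernels) and take the largest with $[\overline{M},\overline{G}]=1$. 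Closest to the paper's one-line derivation is the following variant, which avoids quotients altogether: for every normal subgroup $M$ of $G$ contained in $N$, compute its lower central $G$-series by iterating Corollary~\ref{commutator2}; by Proposition~\ref{lenght}, $H_G(N)$ is precisely the largest such $M$ satisfying $\Gamma_G^{r}(M)=1$ for some $r$, so the $G$-hypercentre is determined. Your proof is salvageable along either of these lines, but as written the main clause of the corollary rests on a false identification.
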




In the literature, there exist  different equivalent descriptions and several results relating to the hypercenter of a group (see, for instance, \cite{baer}, Section 6 in IV of \cite{DH}, \cite{peng1}, \cite{peng2} and Section 6 in Appendix C of \cite{Wei}). With notation in \cite{peng2}, we consider the hypercenter $H(G)$ of $G$ as the upper ascending central series of $G$.

\begin{definition}\label{Hypercenter}

We consider the ascending $G$-series of a normal subgroup $N$:
$$1=\textbf{Z}_G^{0}(N) \trianglelefteq \textbf{Z}_G^{1}(N)=\textbf{Z}(G) \cap N \trianglelefteq \textbf{Z}_G^{2}(N) \trianglelefteq \ldots $$
such that $$\textbf{Z}_G^{i+1}(N)/\textbf{Z}_G^{i}(N)= \textbf{Z}(G/\textbf{Z}_G^{i}(N)) \cap N/\textbf{Z}_G^{i}(N),$$ for $i=1,2, \ldots$. This series is called the  upper central $G$-series of $N$. 
We define  the $G$-hypercenter of $N$ as $H_G(N)=\bigcup \limits_{i} \textbf{Z}_G^{i}(N)$. We have that $H(G) \cap N = H_G(N) \subseteq H(N)$.

\end{definition}

The following result is elementary and can be proven by induction easily.

\begin{proposition} \label{lenght} Let $N$ be a subgroup of $G$. If there exists an integer $r \geq 1$ such that $\Gamma^r_G(N)=1$ in the lower central $G$-series of $N$, then $\textbf{Z}_G^{r-1}(N)=N$ in the upper central $G$-series of $N$. Reciprocally, if $\textbf{Z}_G^{r}(N)=N$, for some integer $r \geq 1$, then $\Gamma^{r+1}_G(N)=1$. 
\end{proposition}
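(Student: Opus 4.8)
The plan is to mimic the classical proof relating the lower and upper central series of a nilpotent group, adapted to the $G$-relative setting, by establishing a crossing inclusion between the two $G$-series by induction. Two facts will drive everything. The first is immediate from the definition of the lower central $G$-series, namely $\Gamma_G^{i+1}(N)=[\Gamma_G^i(N),G]$ for every $i$. The second is the basic property of the upper central $G$-series: since $\textbf{Z}_G^{i}(N)/\textbf{Z}_G^{i-1}(N)$ is contained in the centre of $G/\textbf{Z}_G^{i-1}(N)$ by definition, we obtain $[\textbf{Z}_G^{i}(N),G]\subseteq \textbf{Z}_G^{i-1}(N)$ for every $i\geq 1$. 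I would record these two inclusions at the outset, together with the obvious remarks that every $\Gamma_G^j(N)$ and every $\textbf{Z}_G^j(N)$ is contained in $N$.

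For the first implication, I would assume $\Gamma_G^r(N)=1$ and prove by induction on $i$ that $\Gamma_G^{r-i}(N)\subseteq \textbf{Z}_G^{i}(N)$ for $i=0,1,\dots,r-1$. The base case $i=0$ is the hypothesis $\Gamma_G^r(N)=1=\textbf{Z}_G^0(N)$. For the inductive step, from $\Gamma_G^{r-i}(N)\subseteq \textbf{Z}_G^{i}(N)$ and $\Gamma_G^{r-i}(N)=[\Gamma_G^{r-i-1}(N),G]$ I deduce $[\Gamma_G^{r-i-1}(N),G]\subseteq \textbf{Z}_G^{i}(N)$; passing to the quotient $G/\textbf{Z}_G^{i}(N)$, this says that the image of $\Gamma_G^{r-i-1}(N)$ centralizes $G/\textbf{Z}_G^{i}(N)$. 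Since this image also lies in $N/\textbf{Z}_G^{i}(N)$, the defining equation of the upper central $G$-series forces $\Gamma_G^{r-i-1}(N)\subseteq \textbf{Z}_G^{i+1}(N)$. Taking $i=r-1$ gives $N=\Gamma_G^1(N)\subseteq \textbf{Z}_G^{r-1}(N)\subseteq N$, whence $\textbf{Z}_G^{r-1}(N)=N$.

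For the converse, I would assume $\textbf{Z}_G^r(N)=N$ and prove by induction on $i$ that $\Gamma_G^{i+1}(N)\subseteq \textbf{Z}_G^{r-i}(N)$ for $i=0,1,\dots,r$. The base case $i=0$ is $\Gamma_G^1(N)=N=\textbf{Z}_G^r(N)$. For the inductive step I simply combine the two driving inclusions: $\Gamma_G^{i+2}(N)=[\Gamma_G^{i+1}(N),G]\subseteq [\textbf{Z}_G^{r-i}(N),G]\subseteq \textbf{Z}_G^{r-i-1}(N)$. Setting $i=r$ yields $\Gamma_G^{r+1}(N)\subseteq \textbf{Z}_G^0(N)=1$, as required.

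I expect the only delicate point to be the inductive step of the first implication, where the intersection with $N$ in the definition $\textbf{Z}_G^{i+1}(N)/\textbf{Z}_G^{i}(N)=\textbf{Z}(G/\textbf{Z}_G^{i}(N))\cap N/\textbf{Z}_G^{i}(N)$ must be invoked correctly. This is why I keep track of the containments $\Gamma_G^{j}(N)\subseteq N$ and $\textbf{Z}_G^{j}(N)\subseteq N$ throughout, so that the image of $\Gamma_G^{r-i-1}(N)$ genuinely lands both in the centre of the quotient and in $N/\textbf{Z}_G^{i}(N)$, allowing the intersection to be applied. Everything else is a routine induction built on the two inclusions isolated at the start.
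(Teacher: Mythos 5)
Your proof is correct: both inductions are sound, the indexing works out at the boundary cases, and the key inclusion $[\textbf{Z}_G^{i}(N),G]\subseteq \textbf{Z}_G^{i-1}(N)$ together with the intersection with $N/\textbf{Z}_G^{i}(N)$ in the defining recursion are invoked exactly where needed. The paper omits the proof entirely, remarking only that the result ``is elementary and can be proven by induction easily,'' and your argument is precisely the routine induction being alluded to, i.e.\ the classical lower/upper central series crossing argument transported to the $G$-relative setting.
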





The above proposition allows us to define the concept of the hypercentral $G$-length of a normal subgroup of a group $G$.

\begin{definition} Let $N$ be a hypercentral normal subgroup of a finite group $G$. The least integer $k$ such that $\Gamma^{k+1}_G(N)=1$ is equal to the least integer $k$ such that $\textbf{Z}_G^{k}(N)=N$ and $k$ is called \textbf{the hypercentral $G$-length of $N$}. We denote this integer by $\textbf{l}_G(N)$.

\end{definition}

 In \cite{garr}, it is proved that the Frattini subgroup of a solvable group can be determined from the character table; however, this  fact does not happen when the group is not solvable. If $N$ is a hypercentral normal subgroup in $G$, then $N$ is nilpotent and  $[N,N] \leq \Phi(N) \leq \Phi(G) \cap N$, where $\Phi(G)$ is the Frattini subgroup of $G$, but the subgroups $[N,N]$ and $\phi(N)$ can not be read from the character table of $G$.  Using induction on the hypercentral $G$-length of hypercentral normal subgroups of a group $G$, we obtain the following result. 

\begin{corollary} \label{fratt}
Let $N$ be a hypercentral subgroup in $G$. Then $[G,N] \leq \Phi(G) \cap N$. 

\end{corollary}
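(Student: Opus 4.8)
The plan is to argue by induction on the hypercentral $G$-length $\textbf{l}_G(N)$. Since $N \trianglelefteq G$ we always have $[G,N] \le N$, so the whole content of the statement is the inclusion $[G,N] \le \Phi(G)$; the intersection with $N$ is then automatic. When $\textbf{l}_G(N)=1$ we have $[G,N]=\Gamma_G^2(N)=1$ and there is nothing to prove, which serves as the base of the induction. The inductive step I would organise in two parts: first a self-contained treatment of the ``central case'', where $[G,N]$ already lies in $\mathbf{Z}(G)$, and then a reduction of the general case to this central case by factoring out a suitable subgroup of $\Phi(G)$.

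For the central case I would isolate the following independent claim: if $A \trianglelefteq G$ satisfies $[G,A] \le \mathbf{Z}(G)$, then $[G,A] \le \Phi(G)$. To prove it I take a maximal subgroup $M$ of $G$ and show $[G,A] \le M$. Since $[G,A] \trianglelefteq G$, the only alternative is $G = M[G,A]$, from which I would derive a contradiction. Writing every element of $G$ as $mz$ with $m \in M$ and $z \in [G,A] \le \mathbf{Z}(G)$, the central factors cancel in every commutator, so each $[x,y]$ equals some $[m_1,m_2] \in M$; hence $G' \le M$. But then $[G,A] \le [G,G] = G' \le M$, contradicting $[G,A]\not\le M$. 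Thus $[G,A]$ lies in every maximal subgroup, i.e.\ $[G,A] \le \Phi(G)$.

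For the general inductive step, set $K = [G,N] = \Gamma_G^2(N)$. A short induction shows $\Gamma_G^{i}(K) = \Gamma_G^{i+1}(N)$ for all $i \ge 1$, so $K$ is again a hypercentral normal subgroup of $G$ with $\textbf{l}_G(K) = \textbf{l}_G(N) - 1$. The induction hypothesis applied to $K$ gives $L := [G,K] = \Gamma_G^3(N) \le \Phi(G)\cap K \le \Phi(G)$. I then pass to $\bar G = G/L$ and $\bar N = N/L$ (note $L \le N$). The image $\bar N$ is hypercentral normal in $\bar G$, and $[\bar G,\bar N] = K/L$ satisfies $[\bar G,[\bar G,\bar N]] = [G,K]/L = 1$, so $[\bar G,\bar N] \le \mathbf{Z}(\bar G)$. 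Applying the central case to $\bar G$ and $\bar N$ yields $[\bar G,\bar N] \le \Phi(\bar G)$. Finally, since $L \le \Phi(G)$ we have $\Phi(\bar G) = \Phi(G)/L$, and unravelling this gives $K = [G,N] \le \Phi(G)$, which completes the induction.

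The main obstacle is the central case, and specifically the order of the argument there: one must deduce $G' \le M$ from $G = M[G,A]$ using only the centrality of $[G,A]$ (a direct commutator computation), \emph{before} one is entitled to treat $G/M$ as a group, since a priori $M$ is only maximal, not normal, and normality of $M$ is itself a consequence of $G' \le M$. The remaining technical points are routine: the identity $\Gamma_G^{i}([G,N]) = \Gamma_G^{i+1}(N)$ that produces the drop in $G$-length, and the standard fact $\Phi(G/L) = \Phi(G)/L$, valid precisely because $L \le \Phi(G)$, which is what allows the central-case conclusion in $\bar G$ to be pulled back to $G$.
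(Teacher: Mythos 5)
Your proof is correct, and it shares the paper's outer skeleton --- induction on the hypercentral $G$-length, with the inductive hypothesis applied to $[G,N]$ --- but the heart of the inductive step is genuinely different. The paper never leaves $G$: writing $N_1=[G,N]$ and knowing $[G,N_1]\le \Phi(G)\cap N$ by induction, it fixes a maximal subgroup $M$, supposes $G=N_1M$, and derives the contradiction $N_1\le M\cap N$ via Dedekind's law ($N=N_1(M\cap N)$), the decomposition $[G,N]=[G,N_1][G,M\cap N]$, and the containments $[M,M\cap N]\le M\cap N$ and $[N_1,M\cap N]\le [G,N_1]\le M\cap N$. You instead factor out $L=[G,[G,N]]$, which the induction places inside $\Phi(G)$, so that the image of $[G,N]$ becomes central in $G/L$, and then invoke your self-contained central-case lemma (in essence the classical fact $G'\cap \mathbf{Z}(G)\le \Phi(G)$), pulling the conclusion back through $\Phi(G/L)=\Phi(G)/L$. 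What your route buys is modularity: the only maximal-subgroup argument needed is the clean cancellation giving $G'\le M$ from $G=M[G,A]$ with $[G,A]$ central, and you rightly flag that this must be established \emph{before} any normality of $M$ can be used, since maximal subgroups need not be normal. The cost is two extra (standard) ingredients that the paper does not need: the identity $\Gamma_G^{i}([G,N])=\Gamma_G^{i+1}(N)$ that produces the drop in length, and the Frattini quotient property for $L\le\Phi(G)$. The paper's version avoids quotients altogether at the price of slightly more intricate commutator manipulations inside $G$; both arguments are complete.
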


\begin{proof} We argue by induction on the hypercentral $G$-length of $N$, which is donated by $r$. If $r=1$, then $[G,N]=1$ and the result is trivial. Suppose that $N_1=[G, N] \neq 1$. Since $N$ is a hypercentral subgroup in $G$, then $N_1 < N$ and $N_1$ is a hypercentral subgroup in $G$. The hypercentral $G$-length of $N_1$ is less  than or equal to $r-1$. By induction, $[G,N_1] \leq \Phi(G) \cap N_1 \leq \Phi(G) \cap N.$ 

Let $M$ be a maximal subgroup of $G$, we have $[G,N_1] \leq M \cap N$. We claim that $N_1 \leq M$. Assume that $G=N_1M$, then $N=N_1(M \cap N)$ and $[G,N]=[G,N_1][G, M \cap N]$. Since $M \cap N$ is normal in $M$, we have  $[M,M \cap N]$ is a subgroup of $M \cap N$. Moreover, $[N_1,M \cap N] \leq [N_1,G] \leq M \cap N$. Therefore, $[G, M \cap N]=[M, M \cap N][N_1,M \cap N] \leq M \cap N$. As a consequence, $N_1=[G,N]= [G,N_1][G, M \cap N] \leq M \cap N$, a contradiction, and the proof is complete.
\end{proof}


We point out that it is not difficult to find examples that show that the reverse implication of the above  result is not true (for instance, $G=D_{18}$ and $N= \Phi(G)$).

Now, we denote by $$\Min{N}=\{\widehat{\theta}\mid \theta \in \irr{N}\}=\{\widehat{\theta}_i\mid i=1,\dots,k\}$$ the set of minimal $G$-invariant characters of $N$. By Corollary~\ref{relations}$(B_i)$, we have $$ \widehat{\theta_i}(1)\theta_i(1)= \dfrac{|N|\chi_i(1)^2}{\lambda_i},$$
which is  obtained from a $G$-character table $X$ of $N$. Then, prime divisors of $\widehat{\theta}(1)$ are known from this equation, although  we can not  find out the  prime divisors of $\theta(1)$. In particular, if a prime $p$ does not divide $\widehat{\theta}(1)$ for every minimal $G$-invariant character $\widehat{\theta}$ of $N$, then we can deduce that $N$ has an abelian normal  Sylow $p$-subgroup by It\^o-Michler's theorem (see Corollary 4.11 of \cite{FPRS}).\\

However, we consider the opposite situation. Assume there exists a prime $p$ divides $\widehat{\theta}(1)$ for every $\widehat{\theta}\in \Min{N} \setminus \Lin{N}$. We observe that this fact does not imply that $p$ divides every irreducible character degree of $N$. In this situation, we obtain the following  generalization of the well-known Thompson's theorem for minimal $G$-invariant characters of $N$.  

We keep the notation of Theorem 12.1 of \cite{ISA}. Let
$$\mathscr{S}(N)=\{\theta \in \irr{N} \text{ } | \text{ } p \nmid \theta(1) \text{ and } p \nmid o(\theta)\}.$$
and $s(N)=\sum_{\theta \in \mathscr{S}(N)}\theta(1)^2$. By Theorem 12.1 of \cite{ISA}, we have
$$|{\bf O}^p (N)| \equiv s(N) \text{  } (\text{mod }p ).$$

We consider the set $\mathscr{S}_G(N)=\{ \theta \in \Lin{N} |\text{ } p \nmid o(\theta)\} \subseteq \mathscr{S}(N).$

\begin{theorem}\label{Thompson}
Let $p$ be a prime integer. Suppose $p$ divides $\widehat{\theta}(1)$ for every $\widehat{\theta}\in \Min{N} \setminus \Lin{N}$, then $N$ has a normal $p$-complement.
\end{theorem}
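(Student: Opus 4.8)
The plan is to leverage the classical Thompson theorem in the form stated just above the theorem (Theorem 12.1 of \cite{ISA}), which says that $|\mathbf{O}^p(N)| \equiv s(N) \pmod p$, where $s(N)=\sum_{\theta \in \mathscr{S}(N)}\theta(1)^2$. The goal is to show $\mathbf{O}^p(N)=N$ is impossible under the hypothesis, or rather to produce a normal $p$-complement. A natural route is to show that the hypothesis forces $s(N) \not\equiv 0 \pmod p$ in a way that pins down $\mathbf{O}^p(N)$, and then to identify $\mathbf{O}^p(N)$ with the $p$-complement. The key observation is that the hypothesis, that $p \mid \widehat{\theta}(1)$ for every $\widehat{\theta}\in \Min{N}\setminus\Lin{N}$, tightly controls which $\theta \in \irr{N}$ can possibly lie in $\mathscr{S}(N)$, because $\theta(1) \mid \widehat{\theta}(1)$ always and $\widehat{\theta}(1)=t_i\,\theta(1)$ for the orbit of $\theta$.

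First I would analyze membership in $\mathscr{S}(N)$. If $\theta \in \mathscr{S}(N)$ then $p \nmid \theta(1)$; I want to argue that in fact $\theta$ must be linear and $G$-invariant, i.e. $\theta \in \mathscr{S}_G(N) \subseteq \Lin{N}$. Suppose $\theta \in \mathscr{S}(N)$ is \emph{not} $G$-invariant-and-linear, so its associated $\widehat{\theta}$ lies in $\Min{N}\setminus\Lin{N}$; then by hypothesis $p \mid \widehat{\theta}(1)=t_i\theta(1)$. Since $p \nmid \theta(1)$, this forces $p \mid t_i=|G:I_G(\theta)|$. I would then use this divisibility together with the structure of the orbit sum to show such $\theta$ contribute nothing to $s(N) \bmod p$: the $p$ characters in a full $G$-orbit all share the same degree $\theta(1)$ and the same value $o(\theta^g)=o(\theta)$, so if $p \mid t_i$ the orbit of $\theta$ splits into blocks whose contribution $t_i\theta(1)^2$ to the sum $s(N)$ is divisible by $p$. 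Grouping $\mathscr{S}(N)$ into $G$-orbits and summing orbit by orbit, every orbit that is not a singleton of a $G$-invariant linear character contributes $0 \bmod p$, giving
$$ s(N) \equiv \sum_{\theta \in \mathscr{S}_G(N)} \theta(1)^2 = |\mathscr{S}_G(N)| \pmod p,$$
where the last equality uses $\theta(1)=1$ for linear $\theta$.

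Next I would identify $|\mathscr{S}_G(N)|$. Since $\mathscr{S}_G(N)=\{\theta \in \Lin{N} \mid p \nmid o(\theta)\}$ and, by Proposition~\ref{commutator}, $\Lin{N}=\irr{N/[N,G]}$ is the character group of the abelian quotient $N/[N,G]$, the set $\mathscr{S}_G(N)$ is exactly the set of linear characters of $N/[N,G]$ of order prime to $p$, i.e. the characters of the Hall $p'$-part of $N/[N,G]$. Hence $|\mathscr{S}_G(N)|=|N/[N,G]|_{p'}$, the $p'$-part of the order of $N/[N,G]$. Combining with Thompson's congruence yields
$$ |\mathbf{O}^p(N)| \equiv |N/[N,G]|_{p'} \pmod p.$$
The final step is to translate this numerical congruence into the existence of a normal $p$-complement. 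I would argue that the hypothesis also forces $\mathbf{O}^p(N)$ to be a $p'$-group: every irreducible character of $\mathbf{O}^p(N)$ extends/relates to one of $N$, and the only way $s(N)$ can equal $|\mathbf{O}^p(N)| \bmod p$ with $\mathbf{O}^p(N)$ of full index information available is if $\mathbf{O}^p(N)$ has order coprime to $p$; then $\mathbf{O}^p(N)$ is the desired normal $p$-complement of $N$.

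The hard part will be the passage from the congruence to the actual group-theoretic conclusion that $\mathbf{O}^p(N)$ is a $p'$-group (equivalently that $N$ splits as a normal $p$-complement times a Sylow $p$-subgroup). The congruence alone only controls $|\mathbf{O}^p(N)| \bmod p$, so I expect the real work to lie in showing that $\mathscr{S}(N)$ consists precisely of the linear $G$-invariant characters of $p'$-order \emph{and} in ruling out $p \mid |\mathbf{O}^p(N)|$; this likely requires an inductive argument passing to $\mathbf{O}^p(N)$ as a normal subgroup of $G$ (observing that the hypothesis is inherited), together with the observation that $\mathbf{O}^p(N)=\mathbf{O}^p(\mathbf{O}^p(N))$ to force $p \nmid |\mathbf{O}^p(N)|$, after which $\mathbf{O}^p(N)$ is the normal $p$-complement.
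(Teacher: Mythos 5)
Your argument coincides with the paper's proof all the way up to the key congruence: the same decomposition of $s(N)$ over $\mathscr{S}_G(N)$ and its complement, the same observation that for $\theta \in \mathscr{S}(N)\setminus \mathscr{S}_G(N)$ the hypothesis plus $p \nmid \theta(1)$ forces $p \mid t_i$, the same orbit-by-orbit summation showing each such orbit contributes $t_i\theta(1)^2 \equiv 0 \pmod p$, and the same identification of $\mathscr{S}_G(N)$ with the characters of the abelian group $N/[G,N]$ of $p'$-order (the paper counts these as $|N:[G,N]\mathbf{O}^{p'}(N)|$, which is exactly your $|N/[N,G]|_{p'}$). Up to the congruence $|\mathbf{O}^p(N)| \equiv |N/[N,G]|_{p'} \pmod p$ everything you wrote is correct and matches the paper.

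The gap is your final paragraph: there is no ``hard part'' left, and the inductive machinery you propose is both unnecessary and unsound. Since $|N/[N,G]|_{p'}$ is by definition coprime to $p$, the congruence immediately gives $|\mathbf{O}^p(N)| \not\equiv 0 \pmod p$, that is, $p \nmid |\mathbf{O}^p(N)|$, so $\mathbf{O}^p(N)$ is a $p'$-group; and since $N/\mathbf{O}^p(N)$ is a $p$-group by definition of $\mathbf{O}^p(N)$, this subgroup is precisely the normal $p$-complement. Your remark that ``the congruence alone only controls $|\mathbf{O}^p(N)| \bmod p$'' concedes exactly what is needed: control modulo $p$ is enough to rule out divisibility by $p$. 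Worse, the specific patch you sketch would fail: it is not clear the hypothesis is inherited by $\mathbf{O}^p(N)$, and the claim that $\mathbf{O}^p(\mathbf{O}^p(N))=\mathbf{O}^p(N)$ forces $p \nmid |\mathbf{O}^p(N)|$ is false in general --- any group $H$ with no nontrivial $p$-quotient (for instance a nonabelian simple group of order divisible by $p$) satisfies $\mathbf{O}^p(H)=H$ while $p$ divides $|H|$. Replacing that paragraph by the one-line observation above completes your proof, and makes it essentially identical to the paper's.
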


\begin{proof} We have
$$s(N)=\sum_{\theta \in \mathscr{S}(N)}\theta(1)^2 = \sum_{\theta \in \mathscr{S}_G(N)}\theta(1)^2+\sum_{\theta \in \mathscr{S}(N) \setminus \mathscr{S}_G(N)}\theta(1)^2 .$$

If $\theta \in \mathscr{S}(N) \setminus \mathscr{S}_G(N)$, then $p$ divides the index of the inertia group $I_G(\theta)$ of $\theta$ in $G$. Moreover, every $G$-conjugate character of $\theta$ is also in the set $\mathscr{S}(N) \setminus \mathscr{S}_G(N)$. Therefore,
$$\sum_{\theta \in \mathscr{S}(N) \setminus \mathscr{S}_G(N)}\theta(1)^2 \equiv 0 \text{  } (\text{mod }p )$$
and then
$$s(N) \equiv \sum_{\theta \in \mathscr{S}_G(N)}\theta(1)^2\text{  } (\text{mod }p). $$

If $\theta \in \mathscr{S}_G(N)$, then $\theta \in \Lin{N}$ and $p \nmid o(\theta)=|N:\ker(\theta)|$. By Proposition~\ref{commutator} we have $[G,N] \subseteq \ker(\theta)$. Then $[G,N]{\bf O}^{p'}(N) \subseteq \ker(\theta)$ and $\theta \in \irr{N/[G,N]{\bf O}^{p'}(N)}$.
Reciprocally, if $\theta \in \irr{N/[G,N]{\bf O}^{p'}(N)}$, then $N' \subseteq [G,N] \subseteq \ker(\theta)$ and ${\bf O}^{p'}(N) \subseteq \ker
(\theta)$. We conclude that $\theta \in \Lin{N}$ and $p \nmid o(\theta)$, and then $\theta \in \mathscr{S}_G(N)$. Therefore,
$$\sum_{\theta \in \mathscr{S}_G(N)}\theta(1)^2 = |N: [G,N]{\bf O}^{p'}(N)|$$
and this integer is a $p'$-number. By Theorem 12.1 of \cite{ISA},
$$|{\bf O}^{p}(N)| \equiv s(N) \text{  } (\text{mod }p) \equiv \sum_{\theta \in \mathscr{S}_G(N)}\theta(1)^2  \text{  } (\text{mod }p). $$
Therefore, $p$ does not divide $|{\bf O}^p(N)|$ and ${\bf O}^p(N)$ is a normal $p$-complement of $N$.
\end{proof}

\begin{example} It is not difficult to find examples satisfying the previous theorem. For instance, using GAP (see \cite{GAP}), the group $G=SmallGroup(120,3)$ has a cyclic normal subgroup $N$ of order 20 and we can check from Corollary~\ref{relations}$(B_i)$, that $p=2$ divides $\widehat{\theta}(1)$ for every $\widehat{\theta}\in \Min{N} \setminus \Lin{N}$. Then, we deduce that $N$ has a normal $2$-complement by Theorem~\ref{Thompson}. In fact, the unique prime number dividing a degree of a minimal $G$-invariant character of $N$ is $2$. Therefore, a minimal $G$-invariant character of $N$  has either degree 1 (which means it is a linear $G$-invariant character of $N$) or  $2$-power degree. However, in relation to Thompson's theorem, $p=2$ does not divide every irreducible character degree of $N$ since $N$ is abelian. 

\end{example}



In \cite{LE}, using the classification of nonabelian simple groups, it is proved that $MI$-groups are solvable. A group $G$ is said to be a $MI$-group if every nonlinear irreducible character of $G$ is a multiply imprimitive character or m.i. character for short (that is, for every irreducible character  $\chi \in \irr{G}$ there exists  a proper subgroup $U$ of $G$ and an irreducible character $\lambda \in \irr{U}$ such that $\lambda^G=m\chi$ for some nonnegative integer $m$). We introduce the following definition. 

\begin{definition} Let $G$ be a finite group and $\chi \in \irr{G}$. We say that $(H_{\chi},\lambda_{\chi})$ is a {\bf linear character pair  with respect to $\chi$} if $H_{\chi}$ is a normal subgroup of $G$, $\lambda_{\chi} \in \Lin{H_{\chi}}$ and $\irr{G|\lambda_{\chi}}=\{\chi\}$ (so $\chi$ is m.i. character with notation in \cite{LE}). By Problem (6.3) of \cite{ISA}, we have that $\chi$ and $\lambda_{\chi}$ are fully ramified with respect to $G/H_{\chi}$. 
\end{definition}

Recall that a group $G$ is a monomial group or $M$-group if for every irreducible character $\chi \in \irr{G}$ there exists a subgroup $H$ and a linear irreducible character $\varphi \in \irr{H}$ such that $\chi = \varphi^G$. By Taketa's theorem  (see Theorems 5.12 and 5.13 in \cite{ISA}), every $M$-group is solvable and the derived length of $G$ is bounded by the number of irreducible character degrees of $G$. Structural properties of $nM$-groups were introduced in \cite{how} as a generalization of $M$-groups. A group $G$ is said to be a $nM$-group if for every irreducible character $\chi \in \irr{G}$ there exists a normal subgroup $H$ and a linear irreducible character $\varphi \in \irr{H}$ such that $\chi = \varphi^G$. Observe that in this situation, we have $\chi(1)=|G:H|$ and, using the Frobenius reciprocity,  the ramification number of $\chi$ with  respect to $N$ is equal to $1$. 

Now, we consider the following class of groups as a variation of $nM$-groups and $MI$-groups. 

\begin{definition} \label{normal-invariant}
A group $G$ is said to be a {\bf $nMI$-group} if for every irreducible character $\chi \in \irr{G}$ there exists $(H_{\chi},\lambda_{\chi})$ a linear character pair with respect to $\chi$.

\end{definition}

Trivially, every abelian group $G$ is an $nMI$-group onsidering $\lambda_{\chi}^G= \chi$ for every irreducible character $\chi \in \irr{G}$. Moreover, the character table determines if a group $G$ is a $nMI$-group.

\begin{corollary}
 A finite group $G$ is $nMI$-group if and only if for every $\chi \in \irr{G}$ there exists a normal subgroup $N$ of $G$ such that $[G,N] \subseteq \ker(\chi)$ and the respective element  of the principal diagonal $\Lambda_X$ from a $G$-character table $X$ of $N$ associated to $\chi$ is equal to $|G|$. 
\end{corollary}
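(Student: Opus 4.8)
The plan is to reduce both sides to a statement about the single constituent $\theta$ of $\chi_N$ and to read off the comparison from the diagonal of $\Lambda_{\textsf{X}}$. First I would fix $\chi \in \irr{G}$, a normal subgroup $N \trianglelefteq G$ with $[G,N] \subseteq \ker(\chi)$, and let $\theta \in \irr{N}$ be a constituent of $\chi_N$, so that $\chi \in \irr{G|\theta}$. By Proposition~\ref{commutator} the hypothesis $[G,N] \subseteq \ker(\chi)$ is equivalent to $\theta \in \Lin{N}$; in particular $\theta$ is linear and $G$-invariant, so $\theta(1)=1$ and $t=|G:I_G(\theta)|=1$, and there is then a unique constituent $\theta$ of $\chi_N$. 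Choosing a $G$-character table $\textsf{X}$ of $N$ in which $\chi$ is the representative of its equivalence class, Clifford's theorem gives $\chi_N = e\,\widehat{\theta}=e\,\theta$ with $e=[\chi_N,\theta]$, hence $\chi(1)=e$, and the diagonal entry associated to $\chi$ is $\lambda=|N|\,t\,e^2=|N|\chi(1)^2$ (this is relation $(B_i)$ of Corollary~\ref{relations} specialized to $\theta(1)=t=1$). Consequently $\lambda=|G|$ holds if and only if $\chi(1)^2=|G:N|$, equivalently $e^2=|G:N|$.

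The key step is to show that, under the standing assumption $\theta \in \Lin{N}$, the numerical condition $e^2=|G:N|$ is equivalent to the uniqueness statement $\irr{G|\theta}=\{\chi\}$. For this I would invoke the Clifford-theoretic identity $\sum_{\psi \in \irr{G|\theta}} [\psi_N,\theta]^2 = |G:N|$, which follows from $[\theta^G,\theta^G]_G = [\theta,(\theta^G)_N]_N = |G:N|$, using that $(\theta^G)_N = |G:N|\,\theta$ because $\theta$ is $G$-invariant. Since each summand $[\psi_N,\theta]^2$ is a positive integer, the single summand $e^2=[\chi_N,\theta]^2$ can equal the whole sum $|G:N|$ only when every other summand vanishes, i.e. precisely when $\chi$ is the unique irreducible character of $G$ lying over $\theta$. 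This yields the sought equivalence $\lambda=|G| \Longleftrightarrow \irr{G|\theta}=\{\chi\}$.

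With this dictionary both implications are short. For the forward direction, if $G$ is an $nMI$-group and $\chi \in \irr{G}$, I would take the linear character pair $(H_\chi,\lambda_\chi)$ and set $N=H_\chi$, $\theta=\lambda_\chi \in \Lin{H_\chi}$; then $[G,N]\subseteq\ker(\chi)$ by Proposition~\ref{commutator}, while the full ramification of $\chi$ and $\lambda_\chi$ with respect to $G/H_\chi$ (Problem~(6.3) of \cite{ISA}) gives $e^2=|G:N|$, whence $\lambda=|N|\chi(1)^2=|G|$. For the converse, given $N$ with $[G,N]\subseteq\ker(\chi)$ and associated diagonal entry $|G|$, the computation above forces $\theta \in \Lin{N}$ and $e^2=|G:N|$, and the summation identity then yields $\irr{G|\theta}=\{\chi\}$; thus $(N,\theta)$ is a linear character pair with respect to $\chi$, and since $\chi$ was arbitrary, $G$ is an $nMI$-group.

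I expect the main obstacle to be twofold. First, one must pin down the meaning of ``the element of $\Lambda_{\textsf{X}}$ associated to $\chi$'': the value $\lambda_i=|N|t_ie_i^2$ depends on the chosen class representative through $e_i$, so the argument requires selecting the $G$-character table in which $\chi$ itself represents its class (in the $nMI$ situation this is automatic, as $\chi$ is the only character over $\theta$). Second, one must supply the cross-character identity $\sum_{\psi}[\psi_N,\theta]^2=|G:N|$, since Corollary~\ref{relations} records only the value of a single $\lambda_i$ and not the comparison among all characters lying over $\theta$ that is needed to upgrade $e^2=|G:N|$ to the uniqueness statement $\irr{G|\theta}=\{\chi\}$ in the converse direction.
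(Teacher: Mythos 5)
Your proof is correct and takes essentially the same route as the paper, whose own proof is the one-line citation of Corollary~\ref{relations}$(B_i)$ and Proposition~\ref{commutator}: you reduce via Proposition~\ref{commutator} to $\theta \in \Lin{N}$, translate the diagonal condition into $e^2=|G:N|$ via $(B_i)$, and close the loop with the identity $\sum_{\psi \in \irr{G|\theta}}[\psi_N,\theta]^2=|G:N|$, which is precisely the full-ramification equivalence (Problem (6.3) of \cite{ISA}) that the paper invokes in its definition of a linear character pair. You have simply made explicit the details the paper leaves to the reader, including the careful point about which class representative the diagonal entry of $\Lambda_{\textsf{X}}$ refers to.
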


\begin{proof} It is a consequence of Corollary~\ref{relations}$(B_i)$ and Proposition~\ref{commutator}.
\end{proof}

In a more general context, we present the following definitions for normal subgroups.

\begin{definition} Let $N$ be a normal subgroup of a group $G$ and $\theta \in \irr{N}$. We say that $(H_{\theta},\lambda_{\theta})$ is a {\bf linear $G$-character pair with  respect to $\theta$} if $H_{\theta}$ is a normal subgroup of $G$ contained in $N$, $\lambda_{\theta} \in \Lin{H_{\theta}}$ and $\irr{G|\theta}=\irr{G|\lambda_{\theta} }$. 
\end{definition}

Notice that the last condition above is equivalent to $(\lambda_{\theta})^N= m \widehat{\theta}$ (and then the restriction $\theta_{H_{\theta}}=m\lambda_{\theta}$) for some positive interger $m$.

\begin{definition} \label{normal_t}
A normal subgroup $N$ of a group $G$ is said to be a {\bf $G$-invariant $nMI$-subgroup} if for every $\theta \in \irr{N}$, there exists $(H_{\theta},\lambda_{\theta})$ a linear $G$-invariant character pair with respect to $\theta$. In particular, when $N=G$ we have that $G$ is a $nMI$-group.
\end{definition}

We remark that if $(H_{\theta},\lambda_{\theta})$ is a linear $G$-invariant character pair with respect to $\theta$, then not necessarily $(H_{\theta},\lambda_{\theta})$ is a linear character pair with respect to $\theta$ (see, for instance, Example 3.22)

\begin{corollary} The character table of $G$ determines if a normal subgroup $N$ of $G$ is a $G$-invariant $nMI$-subgroup. In this case, if $\chi \in \irr{G | \theta}$, with $\theta \in \irr{N}$  and $(H_{\theta}, \lambda_{\theta})$ is a linear $G$-character pair respect to $\theta$, then $[G,H_{\theta}] \subseteq \ker(\chi)$ and the respective element of the principal diagonal $\Lambda_X$ from a $G$-character table $X$ of $N$ associated to $\chi$ is equal to $|H_{\theta}|\chi(1)^2$. 

\end{corollary}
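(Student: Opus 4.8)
The plan is to prove the two assertions of the corollary separately: first, that the defining condition of a $G$-invariant $nMI$-subgroup can be read off from the character table of $G$; and second, the structural identity $[G,H_\theta]\subseteq\ker(\chi)$ together with the value $|H_\theta|\chi(1)^2$ of the associated diagonal entry.

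For the determination, I would assemble the ingredients already shown to be recoverable from the character table of $G$. The normal subgroups $H$ of $G$ contained in $N$ are intersections of kernels of irreducible characters, hence determined; for each such $H$ the $G$-conjugacy classes of $H$ and the equivalence classes of $\irr{G}$ with respect to $H$ are determined (as recalled after Theorem~\ref{square}), so the $G$-character table of $H$ and its associated diagonal matrix $\Lambda$ are obtained from the character table of $G$. By the criterion used in the proof of Corollary~\ref{commutator2}, a constituent $\lambda$ of $(\chi_j)_H$ lies in $\Lin{H}$ exactly when its associated diagonal entry equals $|H|\chi_j(1)^2$ (via Corollary~\ref{relations}$(B_j)$); hence $\Lin{H}$, and with it the classes $\irr{G|\lambda}$ for $\lambda\in\Lin{H}$, are identifiable as subsets of $\irr{G}$. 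Since each class $\irr{G|\theta}$, $\theta\in\irr{N}$, is also known as a subset of $\irr{G}$, checking Definition~\ref{normal_t} reduces to the finite verification: for every class $\irr{G|\theta}$, decide whether there exist a normal subgroup $H\subseteq N$ of $G$ and $\lambda\in\Lin{H}$ with $\irr{G|\theta}=\irr{G|\lambda}$ as subsets of $\irr{G}$. All these data being read from the character table, the property is determined.

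For the second assertion, fix $\theta=\theta_i\in\irr{N}$, a linear $G$-character pair $(H_\theta,\lambda_\theta)$, and $\chi=\chi_i\in\irr{G|\theta}$. Since $\irr{G|\theta}=\irr{G|\lambda_\theta}$ and $\lambda_\theta\in\Lin{H_\theta}$, applying Proposition~\ref{commutator} to the normal subgroup $H_\theta$ and the character $\lambda_\theta$ gives $[G,H_\theta]\subseteq\ker(\chi)$ at once. For the diagonal entry, I would invoke the equivalence recorded just after the definition of a linear $G$-character pair, namely $\theta_{H_\theta}=m\lambda_\theta$ and $(\lambda_\theta)^N=m\widehat{\theta}$ for some positive integer $m$. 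Comparing degrees yields $\theta_i(1)=m\lambda_\theta(1)=m$ and $|N:H_\theta|=m\,\widehat{\theta}(1)=m^2t_i=t_i\theta_i(1)^2$, so that $|N|/|H_\theta|=t_i\theta_i(1)^2$. Substituting this into Corollary~\ref{relations}$(B_i)$, which reads $\lambda_i=|N|\chi_i(1)^2/(t_i\theta_i(1)^2)$ for the diagonal entry of $\Lambda_X$ associated to $\chi$, produces exactly $\lambda_i=|H_\theta|\chi(1)^2$.

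The routine parts are the degree bookkeeping and the direct appeals to Proposition~\ref{commutator} and Corollary~\ref{relations}. The hard part will be the determination claim: one must verify that the equality $\irr{G|\theta}=\irr{G|\lambda}$ of equivalence classes arising from two different normal subgroups $N$ and $H$ can be tested purely as an equality of subsets of $\irr{G}$, and that $\Lin{H}$ is correctly singled out from the $G$-character table of $H$. Both rest on the recoverability, from the character table of $G$, of the partition of $\irr{G}$ induced by each normal subgroup, together with the linearity criterion of Corollary~\ref{commutator2}.
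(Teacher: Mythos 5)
Your proposal is correct and takes essentially the same route as the paper, whose own proof is a one-line appeal to exactly the ingredients you use: Corollary~\ref{relations}$(B_i)$, Proposition~\ref{commutator}, and the fact that the equivalence classes of $\irr{G}$ with respect to any normal subgroup are detected in the character table of $G$ (Corollary 4.6 of \cite{FPRS}). Your write-up merely fills in the details the paper leaves implicit, and the key computation $|N:H_{\theta}|=t_i\theta_i(1)^2$, which substituted into $(B_i)$ gives $\lambda_i=|H_{\theta}|\chi(1)^2$, is exactly right.
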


\begin{proof} We obtain the result by using Corollary~\ref{relations}$(B_i)$ and Proposition~\ref{commutator} and the fact that the equivalence classes in the equivalence relation with  respect to $N$, defined on $\irr{G}$, are detected in the character table of the group $G$ (Corollary 4.6 of \cite{FPRS}).
\end{proof}

Let
$$ \Min{N}=\{\widehat{\theta}\mid \theta \in \irr{N}\}$$ be the set of minimal $G$-invariant characters of $N$.  For convenience, we make the following notation 
$$\Mcd{N} = \{\widehat{\theta}(1)\mid \widehat{\theta} \in \Min{N}\}.$$ 
\begin{definition}
Let $N$ a normal subgroup of $G$. Then a {\it  leader $G$-character } of $N$ is a character of $N$ defined by the product $\theta \widehat{\theta}$, for some $\theta \in \irr{N}$. We denote by
$$\Lcd{N}= \{\theta(1)\widehat{\theta}(1)\mid \theta \in \irr{N}\}$$
the set of degrees of leader $G$-characters of $N$.
\end{definition}
Obverve that $|\Mcd{N}|$ and $|\Lcd{N}|$ may be different interger numbers (see Example 3.22). Moreover, by Corollary~\ref{relations}$(B_i)$, degrees of the leader $G$-characters of $N$ are determined by the character table of $G$.
\bigskip

The next theorem is a variation of Taketa's theorem for $G$-invariant $nMI$-subgroups. 

\begin{theorem} \label{normal} Let $N$ be a $G$-invariant $nMI$-subgroup of a group $G$ and let $1=f_1 < f_2 < \cdots < f_s$  be the distinct elements of $\Lcd{N}$. Then
$$\Gamma^{i+1}_G(N) \subseteq \ker(\widehat{\theta_i})$$
with $\theta_i(1)\widehat{\theta_i}(1) = f_i$ and $\theta_i \in \irr{N}$, for $i \in \{1, \ldots, s\}$. In particular, $N$ is a hypercentral subgroup of $G$ and $\textbf{l}_G(N) \leq s=|\Lcd{N}|$.
\end{theorem}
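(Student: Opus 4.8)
The plan is to run a Taketa-style induction along the lower central $G$-series, with kernels of minimal $G$-invariant characters playing the role that derived subgroups play in the classical proof. For $1\le j\le s$ I would set
$$D_j=\bigcap\{\ker(\widehat{\eta})\mid \eta\in\irr{N},\ \eta(1)\widehat{\eta}(1)\le f_j\},$$
together with $D_0=N$. Each $D_j$ is normal in $G$, since every $\widehat{\eta}$ is $G$-invariant and $\ker(\widehat{\eta})=\bigcap_{g\in G}\ker(\eta^g)\subseteq N$. The two endpoints are immediate: the characters of leader degree $f_1=1$ are exactly those in $\Lin{N}$ (here $\widehat{\eta}=\eta$), so Proposition~\ref{commutator} gives $D_1=\bigcap_{\eta\in\Lin{N}}\ker(\eta)=[N,G]$; and $D_s=\bigcap_{\eta\in\irr{N}}\ker(\widehat{\eta})=\bigcap_{\psi\in\irr{N}}\ker(\psi)=1$. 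I would then reduce the whole statement to the single monotonicity claim $[D_{j-1},G]\subseteq D_j$ for every $j$. Granting this, a straightforward induction yields $\Gamma^{j+1}_G(N)=[\Gamma^{j}_G(N),G]\subseteq[D_{j-1},G]\subseteq D_j$; since $\widehat{\theta_i}$ is among the characters defining $D_i$, this gives $\Gamma^{i+1}_G(N)\subseteq D_i\subseteq\ker(\widehat{\theta_i})$, and the case $j=s$ produces $\Gamma^{s+1}_G(N)\subseteq D_s=1$, hence $N$ is hypercentral with $\textbf{l}_G(N)\le s$.

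The monotonicity claim is where the $nMI$-hypothesis enters. Fix $j$ and a character $\theta$ with $\theta(1)\widehat{\theta}(1)\le f_j$; I must show $[D_{j-1},G]\subseteq\ker(\widehat{\theta})$. If the leader degree of $\theta$ is at most $f_{j-1}$ this is trivial, because then $D_{j-1}\subseteq\ker(\widehat{\theta})$ and $D_{j-1}$ is $G$-invariant. So assume $\theta(1)\widehat{\theta}(1)=f_j$ and invoke the $G$-invariant $nMI$-property to obtain a linear $G$-character pair $(H_\theta,\lambda_\theta)$, with $H_\theta\trianglelefteq G$, $H_\theta\subseteq N$, $\lambda_\theta\in\Lin{H_\theta}$, $\theta_{H_\theta}=m\lambda_\theta$ and $(\lambda_\theta)^N=m\widehat{\theta}$. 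Comparing degrees gives $m=\theta(1)$ and $|N:H_\theta|=m\widehat{\theta}(1)=\theta(1)\widehat{\theta}(1)=f_j$. Granting the subclaim $D_{j-1}\subseteq H_\theta$ stated below, I finish by a short kernel chase: Proposition~\ref{commutator} gives $[D_{j-1},G]\subseteq[H_\theta,G]\subseteq\ker(\lambda_\theta)=\ker(\theta_{H_\theta})=\ker(\theta)\cap H_\theta\subseteq\ker(\theta)$, and since $[D_{j-1},G]\trianglelefteq G$ it is contained in $(\ker\theta)^g=\ker(\theta^g)$ for every $g$, hence in $\bigcap_{g}\ker(\theta^g)=\ker(\widehat{\theta})$.

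The crux, and the step I expect to be the main obstacle, is the subclaim $D_{j-1}\subseteq H_\theta$. I would prove it by analysing $\overline{N}=N/H_\theta$, a group of order $|N:H_\theta|=f_j$ carrying the induced $G$-action. Writing $H_\theta$ as the intersection of the kernels of the irreducible characters of $N$ inflated from $\overline{N}$, it suffices to show that every such $\eta$ has leader degree strictly below $f_j$: then $\ker(\eta)\supseteq\ker(\widehat{\eta})\supseteq D_{j-1}$ for each, whence $H_\theta=\bigcap_\eta\ker(\eta)\supseteq D_{j-1}$. The decisive observation is the identity $\eta(1)\widehat{\eta}(1)=|G:I_G(\eta)|\,\eta(1)^2=\sum_{\psi}\psi(1)^2$, the sum running over the $G$-orbit of $\eta$ in $\irr{\overline{N}}$. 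Since that orbit is a proper subset of $\irr{\overline{N}}$ whenever $\eta\neq 1_{\overline{N}}$ (the trivial character is always a fixed point, so it never joins the orbit of a nontrivial $\eta$), this sum is at most $\big(\sum_{\psi\in\irr{\overline{N}}}\psi(1)^2\big)-1=|\overline{N}|-1=f_j-1<f_j$; and for $\eta=1_{\overline{N}}$ the leader degree is $1$. In every case the leader degree lies in $\Lcd{N}$ and is $<f_j$, hence at most $f_{j-1}$, exactly as the subclaim requires. The base case $j=1$ is subsumed, since there $D_0=N=H_\theta$. The two points I would verify routinely along the way are that the $G$-action on $N$ descends to $\overline{N}$ (clear, as $H_\theta\trianglelefteq G$), so that orbits and leader degrees are computed consistently, and that the pair of equalities $(\lambda_\theta)^N=m\widehat{\theta}$ and $\theta_{H_\theta}=m\lambda_\theta$ is precisely the reformulation of $\irr{G|\theta}=\irr{G|\lambda_\theta}$.
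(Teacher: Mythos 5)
Your proposal is correct, but it takes a genuinely different route from the paper's proof. The paper argues by induction on $|G|$: it first verifies that the $G$-invariant $nMI$-hypothesis passes to quotients $N/N_0$, then splits according to whether $\ker(\widehat{\theta})$ is trivial, and in the crucial faithful case proves the structural identification $H_{\theta}=\mathbf{Z}(G)\cap N$ (so $\theta$ is fully ramified over the centre); only then does it invoke the counting identity $|\tilde{N}|=\sum_{\widehat{\chi}}\chi(1)\widehat{\chi}(1)$ for $\tilde{N}=N/(\mathbf{Z}(G)\cap N)$ to see that $\theta(1)\widehat{\theta}(1)$ strictly dominates every leader degree of $\tilde{N}$, and finishes by applying the inductive hypothesis to $\tilde{G}$. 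You dispense with the group-order induction entirely: your chain $D_j$ of kernel intersections, the monotonicity $[D_{j-1},G]\subseteq D_j$, and the key lemma $D_{j-1}\subseteq H_{\theta}$ --- obtained from $|N:H_{\theta}|=f_j$ together with the fact that any $G$-orbit of nontrivial characters of $N/H_{\theta}$ has sum of squares of degrees at most $f_j-1$ --- transplant Isaacs' classical proof of Taketa's theorem (where $G^{(j-1)}\subseteq H$ is extracted from the constituents of $(1_H)^G$) directly to the $G$-invariant setting. The two arguments run on the same numerical engine (leader degrees are orbit sums of $\psi(1)^2$, and summing over all of $\irr{\,\cdot\,}$ of a quotient gives its order), but your realization that one can count in $N/H_{\theta}$ itself, rather than first proving $H_{\theta}$ equals the centre, is exactly what removes the need for the faithful-case reduction, the quotient-stability lemma, and the induction on $|G|$; this makes your proof more self-contained and makes the parallel with the classical Taketa argument transparent. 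What the paper's route buys in exchange is the byproduct, of independent interest, that when $\widehat{\theta}$ is faithful the pair subgroup $H_{\theta}$ is forced to equal $\mathbf{Z}(G)\cap N$. One presentational remark: your phrase ``$D_0=N=H_{\theta}$'' for $j=1$ reads like an extra assumption, but it is in fact a consequence of your own degree computation, since $|N:H_{\theta}|=f_1=1$ (alternatively, take the pair $(N,\theta)$, which is legitimate because $\theta\in\Lin{N}$ when $\theta(1)\widehat{\theta}(1)=1$).
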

 \begin{proof} We prove the theorem, using induction on $|G|$.  Let $N_0$ be any normal subgroup of $G$ contained in $N$. We claim that $N/N_0$ satisfies the hypothesis of the theorem. Let $\theta\in \irr{N/N_0}$, then there exists a $G$-character pair with  respect to $\theta$, say $(H_{\theta}, \lambda_{\theta})$. Note that $\lambda_{\theta}\in {\rm Lin}_G(H_{\theta})$ and $\lambda_{\theta}^N=m\widehat\theta$, for some integer $m$. Hence, 
 $$\ker(\widehat{\theta})=\ker (\lambda_{\theta}^N)=\bigcap\limits_{x \in N}(\ker(\lambda_{\theta}))^x= \bigcap\limits_{x \in N}\ker(\lambda_{\theta}^x)= \ker(\lambda_{\theta}).$$
Then $N_0\leq \ker(\widehat{\theta})=\ker(\lambda_{\theta})\leq H_{\theta}$, which means that $\lambda_{\theta}\in \Lin{H_{\theta}/N_0}$ and so our claim is proved. 
  
 Let $\theta \in \irr{N}$. If $\theta(1)\widehat{\theta}(1)=1$, then $\theta \in \Lin{N}$ and trivially $\Gamma^2_G(N) \subseteq \ker(\theta)=\ker(\widehat\theta)$. Suppose that $\theta(1)\widehat{\theta}(1) \not = 1$. Suppose that $\ker(\widehat{\theta})\not =1$ and we work on $\bar{G}=G/\ker(\widehat{\theta})$ by adopting bar for the subgroups. If $\theta(1)\widehat{\theta}(1)=f_i$, for some $2\leq i\leq s$, then  $\theta(1)\widehat{\theta}(1)$ is $j$-th smallest degree of ${\rm Lcd}_{\bar{G}}(\bar{N})$, for some $j\leq i$.  By induction, we have 
$$\Gamma^{i+1}_G(N)\ker(\widehat{\theta})/\ker(\widehat{\theta})\leq \Gamma^{j+1}_G(N)\ker(\widehat{\theta})/\ker(\widehat{\theta})=\Gamma^{j+1}_{\bar{G}}(\bar{N})\leq \ker(\widehat{\theta})/\ker(\widehat{\theta})=1.$$
Therefore, $\Gamma^{i+1}_G(N) \subseteq \ker(\widehat{\theta_i})$ as wanted. 

 We may assume $\widehat{\theta}$ is faithful.  In this case, we claim that $H_{\theta}={\bf Z}(G)\cap N$.  Since $\ker(\lambda_{\theta})=\ker(\lambda_{\theta}^N)=\ker(\widehat\theta)=1$, then $$[H_{\theta}, G]=\bigcap\limits_{\lambda\in {\rm Lin}_G(H_{\theta})} \ker(\lambda)\leq\ker(\lambda_{\theta})=1.$$ 
Thus, $H_{\theta}\leq {\bf Z}(G)\cap N$.  On the other hand, $\widehat{\theta}|_{{\bf Z}(G)\cap N}=m\beta$, for some $\beta \in \irr{{\bf Z}(G)\cap N}$.  If $x \in {\bf Z}(G)\cap N \setminus H_{\theta}$, then $\lambda_{\theta}^N(x)=0=\widehat{\theta}(x)=m \beta(x)$, which means that $\beta(x)=0$ and  this is not possible. Therefore, ${\bf Z}(G)\cap N = H_{\theta}$.  Moreover, $$m\widehat{\theta}(1)=\lambda_{\theta}^N(1)=|N:{\bf Z}(G)\cap N|$$ and $m=\theta(1)$. Adobting tilde for convention $\tilde{G}=G/({\bf Z}(G)\cap N)$, we have $\theta(1)\widehat{\theta}(1)=|\tilde N|$. On the other side, 
$$|\tilde N|=\sum\limits_{\chi\in \irr{\tilde N}}\chi^2(1)= \sum\limits_{\widehat{\chi}\in {\rm Min}_{\tilde G}(\tilde N)}\chi(1)\widehat{\chi}(1).$$
This implies that  $\theta(1)\widehat{\theta}(1)> \chi(1)\widehat{\chi}(1)$ for all $\chi\in \irr{\tilde N}$. Let  $t=|{\rm Lcd}_{\tilde G}(\tilde N)|$, it follows that $\theta(1)\widehat{\theta}(1)$ is $i$-th smallest number in $\Lcd N$, for some  integer $i\geq t+1$. By induction, we have  $$({\bf Z}(G)\cap N)\Gamma^{t+1}_G(N)/{\bf Z}(G)\cap N=\Gamma^{t+1}_{\tilde G}(\tilde N)\leq \bigcap\limits_{\widehat{\chi}\in {\rm Min}_{\tilde G}(\tilde N)}\ker(\widehat{\chi})=({\bf Z}(G)\cap N)/({\bf Z}(G)\cap N).$$   Hence, $\Gamma^{t+2}_{G}(N)=1=\ker\widehat{\theta}$  as wanted. So the proof is complete. 
\end{proof}

\begin{corollary} \label{invariant} If $G$ is a $nMI$-group and $1=\chi_{1}(1) < \chi_2(1) < \cdots < \chi_n(1)$ are the distinct degrees of irreducible characters of $G$, then
$$\Gamma_{i+1}(G) \leq \ker(\chi_i),$$
for $i \in \{1, \ldots, n\}$. In particular, $G$ is nilpotent and the nilpotence class of $G$ is bounded by $n$. Moreover, every normal subgroup of $G$ is a $G$-invariant $nMI$-subgroup of $G$.
\end{corollary}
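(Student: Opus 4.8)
The plan is to obtain the first two assertions by specializing Theorem~\ref{normal} to the case $N=G$, and to prove the final assertion by a direct restriction argument. First I would record that $G$ is a $G$-invariant $nMI$-subgroup of itself: this is exactly the case $N=G$ of Definition~\ref{normal_t}, which by construction coincides with $G$ being a $nMI$-group. Next I would translate Notation~\ref{notation_tables} to $N=G$. Since conjugation by $G$ acts trivially on $\irr{G}$ (irreducible characters are class functions), every $\theta\in\irr{G}$ is $G$-invariant, so $\widehat{\theta}=\theta$, $\Min{G}=\irr{G}$, and each leader $G$-character $\theta\widehat{\theta}$ has degree $\theta(1)^2$. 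Hence $\Lcd{G}=\{\chi_i(1)^2\mid i=1,\dots,n\}$ and its distinct elements are exactly $1=\chi_1(1)^2<\chi_2(1)^2<\cdots<\chi_n(1)^2$, so $s=|\Lcd{G}|=n$, with $\theta_i=\chi_i$ and $f_i=\chi_i(1)^2$. Finally I would note that the lower central $G$-series of $G$ is the ordinary lower central series, $\Gamma_G^{i+1}(G)=\Gamma_{i+1}(G)$, and that $\textbf{l}_G(G)$ is the nilpotency class of $G$.

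With these identifications, Theorem~\ref{normal} gives $\Gamma_{i+1}(G)=\Gamma_G^{i+1}(G)\subseteq\ker(\widehat{\theta_i})=\ker(\chi_i)$ for $i\in\{1,\dots,n\}$, which is the displayed inclusion. The ``in particular'' clause of Theorem~\ref{normal} states that $G$ is a hypercentral subgroup of itself with $\textbf{l}_G(G)\le s=n$; since $G$ being hypercentral in $G$ means the upper central $G$-series of $G$ (the ordinary upper central series) reaches $G$, this says $G$ is nilpotent, and $\textbf{l}_G(G)\le n$ bounds its nilpotency class by $n$.

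For the last assertion, let $M\trianglelefteq G$ and $\theta\in\irr{M}$; I must produce a linear $G$-character pair $(H_\theta,\lambda_\theta)$ with $H_\theta\le M$. I would choose $\chi\in\irr{G\mid\theta}$ and, using that $G$ is a $nMI$-group, a linear character pair $(H_\chi,\lambda_\chi)$ for $\chi$, so that $\chi$ and $\lambda_\chi$ are fully ramified with respect to $G/H_\chi$ and $\chi_{H_\chi}=e\lambda_\chi$. Set $K=H_\chi\cap M\trianglelefteq G$ and $\lambda=(\lambda_\chi)_K$; since $K$ is normal and $\lambda_\chi$ is linear and $G$-invariant, $\lambda\in\Lin{K}$. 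Restricting $\chi$ to $K$ in two ways, through $\chi_{H_\chi}=e\lambda_\chi$ and through Clifford's theorem $\chi_M=e'\widehat{\theta}$, yields $e\,\lambda=e'\,(\widehat{\theta})_K$, so $(\widehat{\theta})_K$ is a multiple of the single linear character $\lambda$. As $\widehat{\theta}=\sum_i\theta^{g_i}$ restricts to a sum of characters of $K$ whose total is supported only on $\lambda$, each summand, and in particular $\theta_K$, is a positive multiple of $\lambda$; thus $\theta_K=m\lambda$, which by the remark following the definition of a linear $G$-character pair is equivalent to $\irr{G\mid\theta}=\irr{G\mid\lambda}$. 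Hence $(K,\lambda)$ is the required pair and $M$ is a $G$-invariant $nMI$-subgroup.

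The routine part is the specialization to $N=G$; the main obstacle is the final assertion, where the delicate point is to guarantee that the candidate subgroup lies inside $M$ and that $\theta$ restricts to a multiple of a single linear $G$-invariant character of it. This is precisely where I would lean on the support argument above: comparing the two restrictions of $\chi$ to $K=H_\chi\cap M$ forces $(\widehat{\theta})_K$, and hence $\theta_K$, to be concentrated on $\lambda$, which is the crux of verifying the linear $G$-character pair condition.
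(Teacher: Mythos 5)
Your reduction of the first two assertions to Theorem~\ref{normal} with $N=G$ is correct and coincides with the paper's proof: every $\chi\in\irr{G}$ is $G$-invariant, so $\widehat{\chi}=\chi$, $\Lcd{G}=\{\chi_i(1)^2\mid 1\le i\le n\}$ has exactly $n$ elements in the same order as the character degrees, $\Gamma^{i+1}_G(G)=\Gamma_{i+1}(G)$, and $\textbf{l}_G(G)$ is the nilpotency class. That part needs no change.

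The proof of the last assertion, however, has a genuine gap at exactly the step you call the crux. You establish only the restriction condition $\theta_K=m\lambda$ and then assert that, by the remark following the definition of a linear $G$-character pair, this is \emph{equivalent} to $\irr{G|\theta}=\irr{G|\lambda}$. That is a misreading of the remark: it says the pair condition $\irr{G|\theta}=\irr{G|\lambda_\theta}$ is equivalent to the \emph{induction} condition $(\lambda_\theta)^{M}=m\widehat{\theta}$, and that the restriction condition $\theta_{H_\theta}=m\lambda_\theta$ is then a consequence. The converse implication, which is what you use, is false in general: for $K\trianglelefteq G$ with $K\le M$ and $\lambda\in\Lin{K}$, take $K=1$ and $\lambda=1_K$; then $\theta_K=\theta(1)\lambda$ holds for every $\theta\in\irr{M}$, while $\irr{G|1_K}=\irr{G}$ is in general not $\irr{G|\theta}$. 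Concretely, your support argument yields the containment $\irr{G|\theta}\subseteq\irr{G|\lambda}$, but the reverse containment --- that every constituent of $\lambda^M$ is $G$-conjugate to $\theta$ --- is the actual content of the pair condition, and it does not follow from what you proved. To obtain it one must use the singleton property $\irr{G|\lambda_\chi}=\{\chi\}$, which your final argument never invokes (you use only the fully ramified restriction $\chi_{H_\chi}=e\lambda_\chi$). This is precisely what the paper does: by the Mackey-type identity $((\lambda_\chi)_{H_\chi\cap M})^M=((\lambda_\chi)^{H_\chi M})_M$, every constituent of $(\lambda_\chi)^{H_\chi M}$ lies over $\lambda_\chi$, hence under the unique character $\chi$, so by Clifford's theorem $(\lambda_\chi)^{H_\chi M}$ is a multiple of the minimal $G$-invariant character $\widehat{\mu}$ determined by $\chi_{H_\chi M}=k\widehat{\mu}$; restricting back to $M$ and comparing with $\chi_M=e\widehat{\theta}$ gives $\lambda^M=s\widehat{\theta}$, which is the correct equivalent form of the pair condition. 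Your chosen pair $(K,\lambda)=(H_\chi\cap M,(\lambda_\chi)_{H_\chi\cap M})$ does in fact work, but only this induction-side argument proves it; the restriction-side argument you give does not.
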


\begin{proof}  To prove this corollary we just need  to replace $N$ by $G$ in Theorem \ref{normal}. Clearly, in this case $\widehat{\chi}=\chi$ for $\chi\in \irr G$ and $|{\rm Lcd_G(G)}|=|{\rm cd}(G)|$. Thus the first  part of the theorem is done. 

Let $N$ be  a normal subgroup of $G$,  we claim $N$ is a $G$-invariant $nMI$-group of $G$. Let $\theta \in \irr{N}$ and $\chi \in \irr{G|\theta}$.  By hypothesis, there exists $H \trianglelefteq G$ and a linear $G$-invariant character  $\varphi \in \irr{H}$ such that $\varphi^G =m \chi$ and $\chi_H = m \varphi$, for some positive integer $m$. It follows that $\chi \in \irr{N|\varphi}$. 

Firstly, we assume $H \subseteq N$. If $\theta_1 \in \irr{N|\varphi}$, then $[\theta_1^G,\chi] \neq 1$ and $[\chi_N,\theta_1] \neq 1$, thus $\widehat{\theta_1}=\widehat{\theta}$. Then, $\varphi^N = s \widehat{\theta}$ and $\theta_H=s \varphi$, for some positive integer $s$, and the proposition is proved. Now, assume $H$ is not contained in $N$. We have $\chi_{HN}=k \widehat{\lambda}$, with $\lambda \in \irr{HN|\varphi}$ and some integer $k \geq 0$. Hence, $\lambda_H=r\varphi$ and $\varphi^{HN}=r \widehat{\lambda}$, for some positive integer $r$. Moreover,  $\chi_N=e \widehat{\theta}=k \widehat{\lambda}_N$, for some integer $e \geq 0$. Then,
$$\chi_{H \cap N}=e \widehat{\theta}_{H \cap N}=k \widehat{\lambda}_{H \cap N} = k l \beta,$$
for some positive integer $l$. Therefore, $\theta \in \irr{N|\beta}$ and $\theta_{H \cap N} = s \beta$, for some $\beta\in \irr{H\cap N}$  and some integer $s \geq 0$ . Moreover, by Frobenius reciprocity, we have
$$\beta^N=(\varphi_{H \cap N})^N=(\varphi^{HN})_N=r \widehat{\lambda}_N= s\widehat{\theta},$$
and the  proof is complete.


\end{proof}

\begin{example}
Regarding the Corollary~\ref{invariant}, we note that not all nilpotent groups verify that the class of nilpotency is bounded by the number of degrees of irreducible characters. For instance, using GAP (see \cite{GAP}), we can check that the group $G=SmallGroup(32,9)$ has nilpotency class 3 and only has two irreducible character degrees. 
\end{example}

\begin{example}
It is also not difficult to find groups that are $nMI$-groups and we can check easily this property from the character table. For example, we consider in GAP the character table of the group $G=SmallGroup(32,8)$, which has 11 irreducible characters. For every irreducible character $\chi_i \in \irr{G}$, for $i=\{1,\ldots,11\}$, there exists a normal subgroup $N_i$ of $G$ and a linear character $G$-invariant $\lambda_i \in \irr{N_i}$ such that $\irr{G|\lambda_i}= \{\chi_i\}$. Indeed, for linear characters $\chi_i$ of $G$, for $i = 1, \ldots, 8$, this fact is trivial considering $N_i=G$. The group $G$ has also two irreducible characters, $\chi_9$ and $\chi_{10}$,  of degree 2. Both characters are, respectively, the unique irreducible constituent determined by induction in $G$ of certain irreducible character $\lambda_i$, for $i \in \{9,10\}$, of the normal subgroup $$N=N_9=N_{10}= \bigcap\limits_{i=1}^4 \ker(\chi_i).$$  We can check that $\lambda_i$ is a linear $G$-invariant character of $N$ taking acount the relations of Corollary~\ref{relations}($B_i$) obtained from considering the corresponding $G$-character table of $N$ associated to $\chi_i$, for $i \in \{9,10\}$. Finally, the irreducible character $\chi_{11}$ of $G$ has degree $4$. For this character, we consider the normal subgroup $$N_{11}=\bigcap\limits_{i=1}^4 \ker(\chi_i) \bigcap  \ker(\chi_{10}).$$ We have that $\chi_{11}$ is the unique irreducible constituent determined by induction in $G$ of an irreducible character $\lambda_{11}$ of $N_{11}$. Using the corresponding $G$-character table of $N_{11}$ associated to $\chi_{11}$, and the relation Corollary~\ref{relations}$(B_i)$, we can also check that $\lambda_{11}$ is a linear $G$-invariant character of $N_{11}$. Therefore, $(N_i,\lambda_i)$ is a linear character pair with  respect to $\chi_i$, for $i \in \{1, \ldots, 11\}$, and thus $G=SmallGroup(32,8)$ is an $nMI$-group. In relation to Corollary~\ref{invariant}, the nilpotency class of $G$ is exactly equal to the number of irreducible character degrees of $G$ (that is equal to $3$).

\end{example}

\begin{example} 
The bound in the Theorem~\ref{normal} is sharp. In fact,  looking  at group $G=Q_{16}$ and its normal subgroup $N=Q_8$, we can similarly check from the character table of $G$ that $N$ is a $G$-invariant $nMI$-subgroup of $G$ but $G$ is not $nMI$-group. In this case, we have that $\Gamma^4_G(N)=1$ and $\Gamma^i_G(N)\not =1$ for $i<4$. Therefore, the hypercentral $G$-length of $N$ is $\textbf{l}_G(N)=3$.  
Moreover, it is worth mentioning that $2=|\Mcd N|=|\{1,2\}|< |{\rm Lcd_G(N)}|=|\{1,2,4\}|=3$ and so the $G$-hypercentral lenght of $N$ is not bounded by $|\Mcd N|$. 

Moreover, we can check that $N$ is not $nMI$-group since $N$ has an irreducible character $\theta$ such that $\theta(1)=2$ and it does not exist a linear pair $(H_{\theta}, \lambda_{\theta})$ respect to $\theta$  such that $\theta$ and $\lambda_{\theta}$ are fully ramified.
\end{example}

{\bf Acknowledgment.} 

The authors would like to thank the referee for his or her comments which led to the improvement of this work. This work was done during a visit of the first author at   Universitat Politècnica de València(UPV) and Universitat de València(UV). She wishes to thank them 
for their hospitality. The results in this paper are part of the third author's Ph.D. thesis, and he acknowledges the support of Instituto Tecnológico de Santo Domingo (INTEC) and Universidad Autónoma de Santo Domingo (UASD), Santo Domingo, (Dominican Republic).


\end{document}